\newif\ifpictures
\picturestrue

\documentclass[11pt]{amsart}
\usepackage{latexsym}
\usepackage{amssymb}
\usepackage{amsmath}
\usepackage{amscd}
\usepackage[usenames,dvipsnames]{color}
\usepackage{graphicx}
\usepackage{float}
\usepackage{mathrsfs}
\usepackage{enumerate}
\usepackage{tikz}
\usepackage{footnote}
\usepackage{stmaryrd}
\usepackage{hyperref}
\usepackage{wasysym}
\makesavenoteenv{tabular}

\numberwithin{equation}{section}

\tikzset{node distance=1.5cm, auto}

\newtheorem{theorem}{Theorem}[section]
\newtheorem{proposition}[theorem]{Proposition}

\newtheorem{corollary}[theorem]{Corollary}

\theoremstyle{definition}
\newtheorem{definition}[theorem]{Definition}

\theoremstyle{remark}
\newtheorem{example}[theorem]{Example}
\newtheorem{remark}[theorem]{Remark}

\def\<{\langle}
\def\>{\rangle}
\def\lb{\llbracket}
\def\rb{\rrbracket}
\def\ssq{\sqsubseteq}

\def\CC{\mathbb{C}}
\def\NN{\mathbb{N}}

\def\RR{\mathbb{R}}
\def\SS{\mathbb{S}}
\def\ZZ{\mathbb{Z}}

\def\A{\mathcal{A}}

\def\L{\mathcal{L}}
\def\M{\mathcal{M}}
\def\O{\mathcal{O}}
\def\N{\mathcal{N}}
\def\T{\mathbb{T}}
\def\Sp{\mathcal{S}}

\def\x{\mathbf{x}}
\def\z{\mathbf{z}}
\def\w{\mathbf{w}}
\def\t{\mathbf{t}}
\def\1{\mathbf{1}}

\def\Conv{\operatorname{Conv}}
\def\Hom{\operatorname{Hom}}

\def\ord{\operatorname{ord}}
\def\rank{\operatorname{rank}}
\def\Re{\operatorname{Re}}
\def\Row{\operatorname{Row}}
\def\Span{\operatorname{Span}}

\def\endexa{\hfill$\hexagon$}

\newcommand{\struc}[1]{{\color{blue} #1}}

\newcommand{\alp}{\alpha}

\DeclareMathOperator{\eq}{eq}

\title{The lattice of amoebas}
\date{\today}
\author{Jens Forsg{\aa}rd}
\thanks{Research by the first author was supported in part by the SNF project 159240.}
\author{Timo de Wolff}
\address{Jens Forsg{\aa}rd, Universit\'{e} de Gen\`{e}ve, Math\'{e}matiques, Villa Battelle, 1227 Carouge, Suisse\medskip}
\email{Jens.Forsgaard@unige.ch}

\address{Timo de Wolff, Technische Universit\"at Berlin, Institut f\"ur Mathematik, Stra{\ss}e des 17.~Juni 136, 10623 Berlin,
 Germany\medskip}

\email{dewolff@math.tu-berlin.de}

\subjclass[2010]{Primary: 14T05, Secondary: 03G10, 14M25, 52B20}
\keywords{Amoeba, exponential sum, lattice, lopsided amoeba, homotopy}

\begin{document}

\begin{abstract}
We study amoebas of exponential sums as functions of the support set $A$.
To any amoeba, we associate a set of approximating sections of amoebas,
which we call \emph{caissons}. We show that a bounded modular lattice of subspaces 
of a certain vector space induces a lattice structure on the set of caissons.
Our results unifies the theories of lopsided amoebas 
and amoebas of exponential sums.
As an application, we show that our theory of caissons yields 
improved certificates for existence of certain components of 
the complement of an amoeba.
\end{abstract}

\maketitle

\section{Introduction}

Amoebas were introduced by Gel'fand, Kapranov, and Zelevinsky in \cite{GKZ94} in the context of hypergeometric functions
and discriminants. They have been studied intensively over the last two decades,
primarily as they form a bridge between analytic and algebraic geometry on the one hand,
and tropical geometry and combinatorics on the other hand \cite{deWolff:AmoebaTropicalizationSurvey,Maclagan:Sturmfels,Mikhalkin:Survey}.

Determining the topology of an amoeba is a nontrivial task.
Therefore, methods to approximate amoebas have been developed in, for example, \cite{AKNR13,FMMdW,Purbhoo}.
The main tool in use is the \struc{\emph{lopsided amoeba}}. The term ``lopsided'' 
was introduced by Purbhoo in \cite{Purbhoo}, even though the object appeared earlier in Rullg{\aa}rd's thesis \cite{Rullgard}.
The available methods for approximations of amoebas have not been strong enough to 
tackle the main conjectures remaining in amoeba theory, 
for example the maximally sparse conjecture of \cite{PR04} and Rullg{\aa}rd's 
connectivity conjecture, see Section~\ref{sec:Prel} and \cite[p.~39]{Rullgard}.
This paper is part of an effort to obtain more refined approximations of amoebas. 

In what follows, we discuss two seemingly separate approaches to the approximation problem in the generalized setting of amoebas of exponential sums. First, we investigate how amoebas behave under perturbations of their support. Second, we establish a lattice structure on the set of support sets of amoebas and introduce a new object, the \textit{\struc{$B$-caisson}} of an amoeba, which is a finer approximation than the lopsided amoeba, 
see Definition \ref{def:Caissons}.

We show that these two approaches are essentially the same. Moreover, we demonstrate that the theory of $B$-caissons is in close analogue to the theory of amoebas of exponential sums developed by Favorov and Silipo in 
\cite{Favorov} and \cite{Silipo}. Furthermore, we extend the order map for amoebas to $B$-caissons.
In the end, we obtain a single theory which unifies lopsided amoebas, amoebas of exponential sums, the results by Favorov and Silipo, limits of support sets of amoebas, and functorial properties of amoebas discussed by Rullg{\aa}rd.

One can use $B$-caissons to certify the existence of certain connected components of the complement of an amoeba. Determining the existence of such components in dependence of the coefficients of its defining polynomial is a well-known, notoriously hard problem \cite{PR04,TdW13} with several applications. For example, if we consider a univariate polynomial, then the problem is to understand how the norms of its roots depend on the coefficients. This problem dates back to the late 19th century \cite{Fujiwara, Landau} and saw renewed interest in recent years \cite{AKNR13, Schleicher:Stoll, Theobald:deWolff:Trinomials} due to its impact in numerical methods, complexity theory, and applications.
Using results about exponential sums supported on a barycentric circuit \cite{TdW13}, our theory of $B$-caissons yields new results in this direction, see Corollary \ref{Cor:CircuitCertificate}.

\bigskip

Let us explain our results in more detail. We consider an $(1+n)$-variate exponential sum
\begin{equation}
\label{eqn:ExpSum}
f(\z) \ = \ \sum_{\alpha \in A} c_\alpha \, e^{\<\z, \alpha\>}.
\end{equation}
The set $A$ is called the \struc{\emph{support}} of $f$.
We assume that $f(\z)$ is \struc{\textit{pseudo-homogeneous}}
in the sense that there exists a linear form $\xi \in \Hom(\RR^{1+n}, \RR)$
such that $\<\xi, \alpha\> = 1$ for all $\alpha \in A$.
The \struc{\textit{amoeba}} of an exponential sum, which was introduced by Favorov in \cite{Favorov}, 
is defined by
\[
\struc{\A(f)} \ = \ \overline{ \Re(V)}
\]
where $\struc{V} = V(f) \subset \CC^{1+n}$ is the (analytic) variety of $f$,
and where the closure is taken with respect to the standard topology 
on $\RR^{1+n}$.

Let $\struc{(\CC^*)^A}$ denote the family of
all exponential sums \eqref{eqn:ExpSum} with support $A$.
We often consider its closure $\CC^A$. Let $\struc{N}$ denote the cardinality of $A$. 
We arrange the elements of $A$ as the columns of a $(1+n)\times N$-matrix, which, by slight abuse of notation,
is also denoted by $A$.
The starting point of our analysis is a matrix factorization
\begin{equation}
\label{eqn:Factorization}
A \ = \ T B,
\end{equation}
where $A$ is as above and $B$ is a real, pseudo-homogeneous,
$(1+m)\times N$-matrix.
The matrix $T$ induces an isomorphism of $\CC$-vector spaces 
$\Phi\colon \CC^A \rightarrow \CC^B$,
whose inverse is given by
\[
\struc{\Phi^{-1}(f)(\z)} \ = \ f(\z\, T),
\] 
and an embedding $\struc{\iota}\colon \RR^{n} \rightarrow \RR^{m}$ given by
$\x \mapsto \x\, T$.
We denote the image of $\iota$ by $\struc{H_T}$.

\begin{definition}
\label{def:Caissons}
Let $A = TB$ as above and let $f \in \CC^A$. Then, the \struc{\emph{large}} respectively \struc{\emph{small $B$-caissons}} $\L_B(f)$
and $\A_B(f)$ are defined by
\[
 \struc{\L_B(f)} \ = \ \A(\Phi \circ f)
\quad\text{and}\quad
\struc{\A_B(f)} \ = \ \L_B(f) \cap H_T.
\]
Thus, $\L_B(f) \subset \RR^m$, while we consider $\A_B(f)$ as a subset of $\RR^n$
by taking its inverse image under the map $\iota$.
\endexa
\end{definition}

\begin{example}
\label{ex:TrivialFactorizations}
We can write $A = I\,A$ where $I$ denotes the identity matrix
of size $1+n$. In this case $\Phi$ and $\iota$ act as the identities on
$\CC^A$ and $\RR^{1+n}$ respectively. Thus,
\[
\A_A(f) \ = \ \L_A(f) \ = \ \A(f).
\]
\endexa
\end{example}

\begin{example}
A second factorization of $A$ is given by $A = A I$, where $I$ denotes the identity matrix
of size $N$. Note that $I$ is homogeneous
with respect to the form $\xi = (1, \dots, 1)$. In the algebraic case, $\A_I(f)$ coincides with the 
lopsided amoeba of $f$, see \cite[Section 5]{FMMdW}. 
We refer to $\struc{\A_I(f)}$ as the \struc{\textit{lopsided amoeba}} of $f$
also in the non-algebraic setting.
\endexa
\end{example}

This setup allows the use of two methods in the study of the topology of amoebas:
\begin{enumerate}
\item We can consider $B$-caissons strictly in between the amoeba $\A_A(f)$
and the lopsided amoeba $\A_{I}(f)$. The goal is to find a level where
the structure is refined enough to capture the topology of $\A_A(f)$ but
simple enough to be fully understood.
\item For a fixed set of coefficients, we can consider how the amoeba $\A(f)$ 
depends on the support set $A$ under (small) perturbations.
\end{enumerate}
We show that these two approaches are essentially the same. The limiting
object along a perturbation of the support set $A$ along a
subspace of $\CC^A$ is given by a $B$-caisson of $\A(f)$, where $B$ is determined by the subspace
in question. Conversely, all $B$-caissons of $\A(f)$ arise in this fashion, see Theorems \ref{thm:FundamentalInclusion}, \ref{thm:Silipo}, and \ref{thm:LimitsOfAmoebas}.

\subsection{Acknowledgements}
We  thank J.~Maurice Rojas and Laura Felicia Matusevich
for their advice and fruitful discussions. We thank Sascha Timme for his help with creating pictures of amoebas.

The second author was supported by the DFG grant WO 2206/1-1.

\section{Preliminaries}
\label{sec:Prel}

In this section we introduce notation and explain key results which are necessary for an understanding of our results. For further background about amoebas and tropical geometry we recommend \cite{Mikhalkin:Survey,Passare:Tsikh:Survey} and \cite{Maclagan:Sturmfels} respectively.

We call the point configuration $A$ \struc{\emph{polynomial}} or
\struc{\emph{algebraic}} if $A \subset \ZZ^{1+n}$. In this case, there
exists a Laurent polynomial $g \in \CC[\w^{\pm 1}]$
such that $f(\z) = g(e^\z)$. The amoeba of the polynomial 
$g$ was defined in \cite{GKZ94} as the image of the
algebraic variety $V(g)$ under the logarithmic absolute value map.
That is, the amoeba of $g$ coincides with the amoeba of $f$ defined above.

Let  $\ZZ[A] \subset \RR^{1+n}$ denote the abelian group generated by the elements of $A$ We let $\struc{\N}$  denote the \struc{\emph{Newton polytope}} of $f$.
That is, $\N = \Conv(A) \subset \RR[A]$, where $\struc{\RR[A]}$ denotes the $\RR$-vector space generated by the elements of $A$.

We denote the $j$th element of $A$ by \struc{$\alpha(j)$} when necessary.
There are two ranks associated to $A$ which are important to us.
First, let $\struc{r(A)}$ denote the rank of the matrix $A$.
Second, let $\struc{\rho(A)}$ denote the rank
of the lattice (abelian group) $\ZZ[A]$. Since $r(A)$ equals the dimension
of $\RR[A] \simeq \RR \otimes \ZZ[A]$,
it holds that $\rho(A) \geq r(A)$. This inequality can be strict.
In particular, given a factorization as in \eqref{eqn:Factorization},
the induced group homomorphism $\struc{T} \colon \ZZ[B] \rightarrow \ZZ[A]$
can be an isomorphism even if the induced linear
transformation $\struc{T} \colon \RR[B] \rightarrow \RR[A]$ is not.
Here, we denote both maps by $T$ with slight abuse of notation since both maps are given by matrix multiplication by $T$.

\begin{remark}
\label{rem:Dehomogenize}
In examples, it is more convenient to reduce the number of variables
by dehomogenizing the exponential sum $f$. There is a standard procedure.
After a change of variables, we can assume that $\xi = (1,0 ,\dots,0 )$.
This implies that the top row of $A$ is the all ones vector. In this case,
$f(\z) = e^{z_0}g(z_1, \dots, z_n)$ for an $n$-variate exponential sum $g$.
\end{remark}

The order map for amoebas of polynomials, which was introduced in \cite{FPT00}, was extend to exponential
sums by Favorov in \cite{Favorov}, where the \emph{\struc{Ronkin function}}
for polynomial amoebas was generalized by \emph{Jessen function}.
In \cite[Section 3.2]{Silipo}, Silipo noted that the Ronkin function
for exponential sums can be defined as follows.
Let $\struc{S} = \Hom_\ZZ\big(\ZZ[A], \,\SS^1\big)$
be the group of $\SS^1$-characters of $\ZZ[A]$,
which is homeomorphic to $(S^1)^{r(A)}$. 
The {Ronkin function} of $f$ is given by
\[
\struc{N_f(\x)} \ = \ \int_S \log|f_\chi(\x)| \,d\eta(\chi),
\]
where $\struc{\eta}$ denotes the \textit{\struc{Haar measure}} on $S$, and $\struc{f_\chi(\x)}$ is the perturbation of $f$ by $\chi$,
\[
\struc{f_\chi(\x)} \ = \ \sum_{\alpha \in A} c_\alpha \, \chi(\alpha)\,e^{\<\x, \alpha\>}.
\]
The gradient of the Ronkin function is constant on the complement of the amoeba $\A(f)$, see for example \cite{PR04}.
Thus, the Ronkin function induces an (injective) map
\[
\struc{\ord} \colon \pi_0(\RR^n \setminus \A) \rightarrow \ZZ[A]\cap \N
\]
called the \struc{\emph{order map}} of the amoeba $\A(f)$, see \cite{FPT00, PR04}.

Let $A$ be fixed, and let $\alpha \in \ZZ[A] \cap \N$. For an exponential sum $f$,
we denote by $\struc{E_\alpha(f)}$ the connected component of $\RR^n \setminus \A(f)$
whose order is $\alpha$. It is possible that $E_\alpha(f) = \emptyset$.
Given a fixed $f$, we denote by $\struc{\Omega(f)} \subset \ZZ[A]\cap \N$ the set of all $\alpha$
such that $E_\alpha(f) \neq \emptyset$.
Given a fixed $\alpha$, we denote by $\struc{U_\alpha(A)} \subset \CC^A$ the set of of all $f \in \CC^A$ such that
$E_\alpha(f) \neq \emptyset$.
Rullg{\aa}rd showed in \cite[Theorem~10]{Rullgard} that, in the algebraic case, the sets $U_\alpha(A)$ are open and semi-algebraic, but left as an open problem whether or not the sets $U_\alpha(A)$ are connected, which we call \emph{Rullg{\aa}rd's connectivity conjecture}.
Silipo extended Rullg{\aa}rd's results to the case of exponential sums, 
see \cite[Theorem~2.6]{Silipo} and Theorem~\ref{thm:Silipo}.

\begin{example}
Consider the factorization $A = TB$ given by
\[
\begin{bmatrix} 1 & 1 & 1 \\ 0 & 1 & \pi \end{bmatrix}
\ = \
\begin{bmatrix} 1 & 0 & 0 \\ 0 & 1 & \pi \end{bmatrix}
\begin{bmatrix} 1 & 1 & 1 \\ 0 & 1 & 0 \\ 0 & 0 & 1\end{bmatrix}.
\]
On the one hand, we have that $\rho(A) = \rho(B) = 3$, so $\ZZ[A] \simeq \ZZ[B] = \ZZ^3$ where the
isomorphism is given by the matrix $T$. On the other hand, we have that $r(A) = 2$ while $r(B) = 3$.
That is, $\RR[A] \simeq \RR^2$ and $\RR[B] \simeq \RR^3$.

To describe the sets $\ZZ[A]\cap \N(A)$ and $\ZZ[B]\cap \N(B)$,
we dehomogenize as in Remark~\ref{rem:Dehomogenize},
(we proceed in an analogous manner in all later examples).
The Newton polytope $\N(B)$ is the standard simplex in $\RR^2$. 
The set $\ZZ[B]\cap \N(B)$ contains three points: the three vertices of the simplex.
The Newton polytope $\N(A)$ is the interval $[0,\pi]$.
The set $\ZZ[A]\cap \N(A)$ is infinite. It follows from \cite{Silipo}, however,
that the image of the order map is still finite, see Section \ref{sec:Relationships}.
\endexa
\end{example}

The \emph{spine} $\Sp$ of an algebraic amoeba was introduced in \cite{PR04}. While it is possible to extend
the concept of spines to amoebas of exponential sums, it suffices for our purposes to consider spines
of polynomial amoebas. The spine $\Sp$ is a tropical variety which is a strong deformation
retract of the (algebraic) amoeba \cite[Theorem 1]{PR04}. In particular, there is an bijection
$\pi_0(\RR^n \setminus \A) \rightarrow \pi_0(\RR^n \setminus \Sp)$,
implying that the order map is well-defined also for the spine $\Sp$.
The spine, as any tropical variety, has a dual triangulation of
the Newton polytope $\N$, see \cite[Definition 2]{PR04}.

\section{The Poset Lattice}
\label{sec:Lattice}

In this section we describe the lattice structure induced by the relationship \eqref{eqn:Factorization}. This lattice structure is the theoretical framework within which we establish a lattice structure of amoebas.

Let us denote by $\struc{M_N = M_N(\RR)}$ the set of real matrices $A$ with $N$ columns and any number of rows, such that there exists a linear form $\xi$ with $\xi A = \1$, where $\1 = (1, \dots, 1)$.
That is, the vector $\1$ belongs to the real row span $\struc{\Row(A)}$ of $A$.

\begin{definition}
We define an equivalence relation $\struc{\sim}$ on $M_N$ by
\begin{equation}
 \struc{A \sim B} \ \Leftrightarrow \ \Row(A) = \Row(B).
 \label{Equ:DefEquivalenceRelation}
\end{equation}
We denote the quotient $M_N/\sim$ by $\struc{\M_N}$,
and for $A \in M_N$ we denote the corresponding equivalence class 
by $\struc{\llbracket A \rrbracket} \in \M_N$.
We write $\struc{\Row(\lb A \rb)}$ for the row space defined by any representative
of $\lb A \rb$.
\endexa
\end{definition}

\begin{remark}
\label{rem:AllSubspacesAppear}
Let $L \subset \RR^N$ be a subspace of dimension $d$ containing $\1$. 
By choosing a basis of $L$ we obtain an $d \times N$-matrix
$A$ with $\Row(A) = L$. We make two remarks.
First, $L$ is realized as the row space of some $\lb A\rb \in \M_N$.
Second, each equivalence class $\llbracket A \rrbracket \in \M_N$ has 
a representative $A$ which is an $(1+n) \times N$-matrix where $1+n \leq N$.
\end{remark}

\begin{definition}
\label{def:PartialOrder}
We define a partial order on $\M_N(\RR)$ by
\[
  \struc{\llbracket B \rrbracket \sqsubseteq \llbracket A \rrbracket} \ \Leftrightarrow \
  \Row(\lb A\rb) \subseteq \Row(\lb B\rb).
  \]
We explain the reversal of the inequalities in Section \ref{ssec:LatticeMorphisms}.
\endexa
\end{definition}

Notice that $A \sim B$ if and only if there are matrices $S$ and $T$ such that $A = TB$
and $B = SA$.
And, $\lb B \rb \ssq \lb A \rb$ if and only if for any choice of representatives $A$ and $B$
there is a matrix $T$ with $A = TB$.

\begin{remark}
The rank function $r(A)$ is constant on the classes $\llbracket A \rrbracket$, 
as it equals the dimension of $\Row(A)$.
Hence, it induces a function $\struc{r}\colon \M_N \rightarrow \NN$.
To adjust for the reversal of the inequalities in Definition~\ref{def:PartialOrder},
we define $\struc{\hat r(\lb A\rb)} = N-r(\lb A\rb)$. Notice that $\hat r(\lb A \rb)$
is the rank of the orthogonal complement of $\Row(A)$, and
since $\Span(\1) \subset \Row(A)$ we have that $\hat r(\lb A \rb) \in \{0, \dots, N-1\}$.
\end{remark}

\begin{theorem}
The space $(\M_N, \ssq)$ with the rank function $\hat r$
is isomorphic to the bounded modular lattice of all subspaces of\/ $\RR^{N-1}$.
\end{theorem}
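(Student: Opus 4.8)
The plan is to exhibit an explicit order-preserving bijection between $(\M_N, \ssq)$ and the lattice of subspaces of $\RR^{N-1}$, and to check that it respects the rank function. First I would fix, once and for all, the line $\Span(\1) \subset \RR^N$ and its quotient $\RR^N/\Span(\1) \cong \RR^{N-1}$. The key observation is that every representative matrix $A \in M_N$ has a row space $\Row(A)$ that contains $\1$, so passing to the quotient sends $\Row(\lb A\rb)$ to a well-defined subspace $\overline{\Row(\lb A\rb)} \subset \RR^N/\Span(\1)$. I would define the candidate map $\psi\colon \M_N \to \{\text{subspaces of } \RR^{N-1}\}$ by $\psi(\lb A\rb) = \overline{\Row(\lb A \rb)}$, after identifying $\RR^N/\Span(\1)$ with $\RR^{N-1}$.

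Next I would verify that $\psi$ is a well-defined bijection. Well-definedness is immediate from Definition \eqref{Equ:DefEquivalenceRelation}, since $A \sim B$ means $\Row(A) = \Row(B)$, hence equal images in the quotient. Injectivity: if $\overline{\Row(\lb A\rb)} = \overline{\Row(\lb B\rb)}$, then pulling back along $\RR^N \to \RR^N/\Span(\1)$ and using that both row spaces already contain $\Span(\1)$, we recover $\Row(\lb A\rb) = \Row(\lb B\rb)$, i.e.\ $\lb A\rb = \lb B\rb$. Surjectivity: given any subspace $W \subset \RR^N/\Span(\1)$, its preimage $L \subset \RR^N$ is a subspace containing $\1$, and by Remark~\ref{rem:AllSubspacesAppear} $L = \Row(A)$ for some $\lb A\rb \in \M_N$; then $\psi(\lb A \rb) = W$.

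Then I would check that $\psi$ is an order isomorphism. By Definition~\ref{def:PartialOrder}, $\lb B\rb \ssq \lb A\rb$ iff $\Row(\lb A\rb) \subseteq \Row(\lb B\rb)$. Since both subspaces contain $\Span(\1)$, this containment holds iff it holds after passing to the quotient, i.e.\ iff $\psi(\lb A\rb) \subseteq \psi(\lb B\rb)$, which is exactly the order relation on subspaces of $\RR^{N-1}$. Hence $\psi$ is an isomorphism of posets, and therefore of lattices, with meet corresponding to intersection and join to sum of subspaces. Finally, the rank statement: $\hat r(\lb A\rb) = N - r(\lb A\rb) = \dim \Row(\lb A\rb)^\perp = \dim\big(\RR^N/\Row(\lb A\rb)\big)$, and since $\Row(\lb A\rb)$ contains the line $\Span(\1)$, this equals $\dim\big((\RR^N/\Span(\1))\big/\overline{\Row(\lb A\rb)}\big) = (N-1) - \dim \psi(\lb A\rb)$. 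That is, $\hat r$ corresponds to the codimension function on subspaces of $\RR^{N-1}$; composing with the order-reversing complementation (or simply noting that the lattice of subspaces of $\RR^{N-1}$ carries both the dimension and codimension gradings and is self-dual) identifies $(\M_N, \ssq, \hat r)$ with the graded lattice of subspaces of $\RR^{N-1}$. Modularity and boundedness are then inherited, since the subspace lattice of any finite-dimensional vector space is a bounded modular lattice.

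The routine parts here are the linear-algebra bookkeeping; the one point that needs a little care is the bookkeeping around the rank function and the direction of the order, i.e.\ making sure that the reversal built into Definition~\ref{def:PartialOrder} and the definition $\hat r = N - r$ are mutually consistent with the stated isomorphism onto the \emph{dimension}-graded subspace lattice of $\RR^{N-1}$ rather than its codimension-graded dual. I expect this to be the only genuine obstacle; everything else follows from the standard correspondence between subspaces of $\RR^N$ containing a fixed line and subspaces of the quotient $\RR^{N-1}$.
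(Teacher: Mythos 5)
Your argument is correct and amounts to the same construction as the paper's: the paper sends $\lb A\rb$ directly to $\Row(\lb A\rb)^\perp \subseteq \1^\perp \cong \RR^{N-1}$, which is exactly your quotient map $\psi$ followed by the complementation you invoke at the end. The one wrinkle is the mid-proof claim that $\psi$ itself is ``an isomorphism of posets'': your own displayed equivalence $\lb B\rb \ssq \lb A\rb \Leftrightarrow \psi(\lb A\rb) \subseteq \psi(\lb B\rb)$ shows $\psi$ is an \emph{anti}-isomorphism, so the final complementation step is not an optional normalization but a necessary part of the proof (and it is also what makes $\hat r$ match the dimension grading rather than codimension); once that is stated as the actual isomorphism, the proof is complete.
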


\begin{proof}
The space of all subspaces of $\RR^{N-1}$ is a bounded modular lattice with respect to inclusion, where
the grading is given by the vector space dimension.
The isomorphism is given by the map $\lb A \rb \mapsto \Row \lb A \rb^\perp$. 
It is injective by definition, and surjective by Remark~\ref{rem:AllSubspacesAppear}.
It is straightforward to check the remaining conditions of a lattice.
\end{proof}

Consider an exponential sum $f$. 
The matrix $A$ is obtained by choosing an ordering of the elements of the set $A$.
It can happen that the matrices one obtains from distinct choices of orderings define distinct 
equivalence classes in $\M_N$.
One could enlarge the equivalence classes by allowing also permutations
of columns. This would not introduce any additional difficulty, but yields a less clear notation and no further insights.
Thus, we have chosen not to take this approach. 

\begin{proposition}
The function $\rho\colon \M_N\rightarrow \ZZ$ given by $\lb A  \rb \mapsto \rank(\ZZ[A])$ is well-defined and decreasing.
\end{proposition}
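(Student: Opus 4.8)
The plan is to show two things separately: that $\rho$ is constant on equivalence classes (well-definedness), and that it respects the partial order in the reversed sense, i.e. $\lb B\rb \ssq \lb A\rb$ implies $\rho(\lb A\rb) \le \rho(\lb B\rb)$.

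For well-definedness, suppose $A \sim B$, so by the remark following Definition~\ref{def:PartialOrder} there are matrices $S, T$ with $A = TB$ and $B = SA$. Restricting to integer combinations, matrix multiplication by $T$ sends $\ZZ[B]$ into $\ZZ[A]$ and matrix multiplication by $S$ sends $\ZZ[A]$ into $\ZZ[B]$; these are group homomorphisms inverse to one another as maps $\ZZ[B] \leftrightarrow \ZZ[A]$, hence $\ZZ[A] \cong \ZZ[B]$ as abelian groups and $\rank(\ZZ[A]) = \rank(\ZZ[B])$. (Alternatively: the columns of $B$ span $\RR \otimes \ZZ[B]$, and $T$ carries them to the columns of $A$; since $T$ is invertible on $\Row$-complements it induces an isomorphism here, so the ranks agree.) Thus $\rho$ descends to a function on $\M_N$.

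For monotonicity, suppose $\lb B\rb \ssq \lb A\rb$, so $\Row(\lb A\rb) \subseteq \Row(\lb B\rb)$, equivalently (again by the remark after Definition~\ref{def:PartialOrder}) there is a matrix $T$ with $A = TB$. Then each column $\alpha(j)$ of $A$ is the image under $T$ of the corresponding column $\beta(j)$ of $B$, so $T$ restricts to a group homomorphism $\ZZ[B] \twoheadrightarrow \ZZ[A]$ which is surjective since the $\alpha(j)$ generate $\ZZ[A]$. A surjection of finitely generated abelian groups cannot increase rank, so $\rank(\ZZ[A]) \le \rank(\ZZ[B])$, i.e. $\rho(\lb A\rb) \le \rho(\lb B\rb)$. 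Since the order on $\M_N$ is reversed relative to inclusion of row spaces, this says precisely that $\rho$ is decreasing with respect to $\ssq$.

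The argument is essentially routine; the only point requiring a little care is keeping track of the two reversals at play — the one between a matrix and its row space, and the one built into $\ssq$ — so that the final direction of the inequality is stated correctly. The natural way to avoid slips is to phrase everything in terms of the surjection $T\colon \ZZ[B] \to \ZZ[A]$, from which both well-definedness (apply it in both directions) and monotonicity (surjections don't raise rank) follow transparently.
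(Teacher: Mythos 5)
Your proof is correct and follows essentially the same route as the paper's: the key observation in both is that $A = TB$ makes $T$ a surjection $\ZZ[B] \twoheadrightarrow \ZZ[A]$ (generators map to generators), so rank cannot increase. You spell out the well-definedness step explicitly via the mutually inverse maps $S$ and $T$, which the paper leaves implicit; otherwise the arguments coincide.
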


\begin{proof}
Let $A = TB$ for some $T$. It follows that $\ZZ[A]$ is the image of $\ZZ[B]$
under the group homomorphism induced by $T$. In particular, any set of generators of $\ZZ[B]$ projects by $T$ onto
a set of generators of $\ZZ[A]$, so $\rho(B) \geq \rho(A)$.
\end{proof}

\begin{definition}
\label{def:MinimalLift}
We call a matrix $A$ with $\rho(A) = r(A)$ a \struc{\emph{rational point configuration}}, and the corresponding class $\llbracket A \rrbracket$ a \struc{\emph{rational}} class.
Let $\llbracket A \rrbracket \in \M_N$. We call a rational class $\llbracket B \rrbracket$
satisfying $\llbracket B \rrbracket \sqsubseteq \llbracket A \rrbracket$
a \struc{\emph{rational lift}} of $\lb A \rb$.
If, in addition, $\rho(\llbracket B \rrbracket) = \rho(\llbracket A \rrbracket)$ 
then we call $\lb B \rb$ a \struc{\emph{minimal rational lift}} of $\lb A\rb$.
\endexa
\end{definition}

The following universal property of minimal rational lifts shows that any rational lift of $\lb A \rb$
is a composition of the unique minimal rational lift and and integer (algebraic)
transformation. This decomposition is one our main tools to understand the relationship
between an amoeba and its caissons.

\begin{theorem}
\label{thm:MinimalRationalLifts} 
Each $\lb A \rb \in \M_N$ has a unique minimal rational lift, denoted as \/ $\widehat{\lb A \rb}$,
fulfilling the universal property that every rational lift of\/ $\lb A \rb$ is a lift of\/ $\widehat{\lb A \rb}$.
\end{theorem}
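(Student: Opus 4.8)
The plan is to construct $\widehat{\lb A \rb}$ directly from the row space $L = \Row(\lb A \rb) \subset \RR^N$ and then verify the universal property. First I would describe the rational lifts of $\lb A \rb$ in lattice terms: by Definition~\ref{def:PartialOrder}, a lift $\lb B \rb$ of $\lb A \rb$ corresponds to a subspace $L' = \Row(\lb B \rb)$ with $L \subseteq L' \subseteq \RR^N$, and $\lb B \rb$ is rational exactly when $L'$ is a \emph{rational subspace}, i.e.\ spanned by vectors in $\QQ^N$ (equivalently, $\rho(B) = r(B)$, since then $\ZZ[B]$ spans $L'$ and has full rank $\dim L'$; conversely if $\rho(B)=r(B)$ then $L' = \RR \otimes \ZZ[B]$ is defined over $\QQ$). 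So the rational lifts of $\lb A \rb$ are precisely the rational subspaces containing $L$, ordered by reverse inclusion. The minimal rational lift should therefore be the \emph{smallest} rational subspace containing $L$ — call it $\widehat L$ — and the claim becomes: $\widehat L$ exists, is unique, and every rational subspace containing $L$ contains $\widehat L$.

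Second, I would prove that a unique smallest rational subspace containing $L$ exists. The key point is that the intersection of two rational subspaces is rational: if $L_1, L_2$ are each spanned by rational vectors, then $L_1 \cap L_2$ is the solution space of a linear system with rational coefficients (take rational equations cutting out each $L_i$ and combine them), hence is itself spanned by rational vectors. Since $\RR^N$ is itself rational, the family of rational subspaces containing $L$ is nonempty, and it is closed under (finite, hence — by dimension — arbitrary) intersection; therefore $\widehat L := \bigcap \{ L' \supseteq L : L' \text{ rational} \}$ is rational, contains $L$, and is contained in every rational subspace containing $L$. Translating back: let $\widehat{\lb A \rb}$ be the class with $\Row(\widehat{\lb A\rb}) = \widehat L$; this is a rational class, it satisfies $\widehat{\lb A\rb} \ssq \lb A \rb$, and by construction $\widehat L \subseteq L'$ for every rational $L' \supseteq L$, i.e.\ $\lb B \rb \ssq \widehat{\lb A \rb}$ for every rational lift $\lb B \rb$ of $\lb A \rb$. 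In particular $\widehat{\lb A \rb}$ is a rational lift of $\lb A \rb$ through which every rational lift factors, so it is the unique minimal rational lift: it is $\ssq$-maximal among rational lifts, and $\rho(\widehat{\lb A\rb}) = \dim \widehat L$ — I would still need to check $\dim \widehat L = \rho(\lb A \rb)$, which follows because $\ZZ[A]$ is a finite-rank lattice whose $\QQ$-span is a rational subspace containing $A$, hence containing $L$'s generating set; this $\QQ$-span has dimension $\rho(A)$ and is rational, so $\widehat L$ is contained in it, giving $\dim \widehat L \le \rho(A)$, while $\dim \widehat L \ge r(A)$ is clear and in fact $\rho(\widehat{\lb A\rb}) = \dim\widehat L \ge \rho(\lb A\rb)$ is forced by the previous proposition ($\rho$ is decreasing and $\widehat{\lb A\rb}\ssq\lb A\rb$), so equality holds throughout.

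The main obstacle is the well-definedness and the correct framing of ``rational'' at the level of $\M_N$ rather than $M_N$ — one must be careful that the notion of a rational subspace is genuinely a property of the row space and interacts correctly with the factorization $A = TB$ (in particular that $T$ need not be rational even when both $\lb A \rb$ and $\lb B \rb$ are rational, which is precisely why the universal property is stated in terms of lifts rather than matrix identities). The actual lattice-theoretic core — that rational subspaces are closed under intersection, hence admit a minimum above any given subspace — is short once set up correctly. I would also remark that this identifies $\widehat{\lb A\rb}$ concretely: a representative is obtained by taking any $\QQ$-basis of the $\QQ$-span of the columns of $A$ inside $\RR[A]$, which connects the statement to the minimal-lift constructions used in the examples.
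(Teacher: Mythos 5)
Your proposal is correct and follows essentially the same route as the paper: both arguments rest on the observation that rational subspaces of $\RR^N$ are closed under intersection, so a unique smallest rational subspace containing $\Row(\lb A \rb)$ exists and serves as $\Row$ of $\widehat{\lb A \rb}$, with your write-up merely being more explicit about why this intersection has dimension $\rho(\lb A\rb)$. One small slip worth fixing: in the parenthetical justifying that $\lb B \rb$ is rational iff $\Row(B)$ is a rational subspace, you say that $\ZZ[B]$ spans $L'=\Row(B)$, but $\ZZ[B]$ is the column lattice sitting in $\RR^{1+m}$, not a subset of $\RR^N$; the correct link is via the relation lattice $\ZZ^N \cap \Row(B)^\perp$, whose rank is $N-\rho(B)$, so that $\rho(B)=r(B)$ exactly when $\Row(B)^\perp$ (equivalently $\Row(B)$) is rational.
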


\begin{proof}
An element $\lb B \rb \in M_N$ is rational if and only if $\Row(B)$
is \emph{rational}, i.e., it has a basis consisting of rational vectors. Given two rational 
subspaces $\Row(B_1)$ and $\Row(B_2)$ of $\RR^N$ containing $\Row(A)$,
then $\Row(B_1) \cap \Row(B_2)$ is also a rational subspace containing $\Row(A)$.
Hence, if $\Row(B_1)$ is minimal, then $\Row(B_1) = \Row(B_1) \cap \Row(B_2)$,
implying both claims of the theorem.
\end{proof}

\section{The Relationship between an Amoeba and its Caissons}
\label{sec:Relationships}

In this section, we apply the results about the poset lattice from Section~\ref{sec:Lattice} to understand the
relationship between an amoeba and its associated caissons. 
We obtain a lattice structure on the set of amoebas, 
and an associated lattice structure on the set of sets of orders.

\begin{theorem}
\label{thm:FundamentalInclusion}
Let $\llbracket B \rrbracket \sqsubseteq \llbracket A \rrbracket$.
Then, for any $f \in \CC^A$, it holds that $\A \subset \A_B$ and $\iota(\A) \subset \L_B$.
\label{Thm:AmoebaContainment}
\end{theorem}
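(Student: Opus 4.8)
The plan is to deduce both inclusions from the single observation that $f$ factors through $\Phi(f)$ and the linear map $\z\mapsto\z\,T$. I would first fix a factorization $A = TB$ as in \eqref{eqn:Factorization}; such a $T$ exists once representatives of the classes $\lb A\rb$ and $\lb B\rb$ are chosen, by the observation following Definition~\ref{def:PartialOrder}. Reading the formula for $\Phi^{-1}$ from the introduction with $\Phi(f)$ in place of $f$ gives
\[
f(\z) \ = \ \Phi(f)(\z\,T) \qquad \text{for every } \z \in \CC^{1+n}.
\]
Consequently, if $\z \in V(f)$ then $\Phi(f)(\z\,T) = f(\z) = 0$, so $\z\,T \in V(\Phi(f))$. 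Since $T$ is real, $\Re(\z\,T) = \Re(\z)\,T$, and hence $\iota\big(\Re(V(f))\big) = \Re(V(f))\,T \subseteq \Re\big(V(\Phi(f))\big)$.

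Next I would pass to closures. Multiplication by $T$ is continuous, so it maps the closure of a set into the closure of the image of that set; combining this with the inclusion just obtained, and using that $\L_B(f) = \A(\Phi(f))$ is closed by definition, yields
\[
\iota(\A) \ = \ \overline{\Re(V(f))}\,T \ \subseteq \ \overline{\Re(V(f))\,T} \ \subseteq \ \overline{\Re\big(V(\Phi(f))\big)} \ = \ \L_B .
\]
This is exactly the second assertion of the theorem.

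For the first assertion, I would note that $\iota(\A) \subseteq H_T$ holds trivially, since $H_T$ is by definition the image of $\iota$. Together with $\iota(\A) \subseteq \L_B$ this gives $\iota(\A) \subseteq \L_B(f) \cap H_T = \A_B(f)$, and therefore $\A \subseteq \iota^{-1}\big(\A_B(f)\big) = \A_B$, which is the first assertion.

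The argument is short, and the points that need care are purely bookkeeping. One must apply the identity $f(\z) = \Phi(f)(\z\,T)$ for matrices genuinely satisfying $A = TB$ (equivalently, $\alpha(j) = T\beta(j)$ columnwise), not merely for classes with $\Row(A)\subseteq\Row(B)$; and since $\iota$ need not be injective, the final step relies only on the trivial inclusion $\A \subseteq \iota^{-1}(\iota(\A))$ rather than on identifying $\A$ with $\iota(\A)$. I expect the only genuine — and very mild — obstacle to be making the closure step precise, i.e.\ checking that $\overline{S}\,T \subseteq \overline{S\,T}$ for $S = \Re(V(f))$, which is immediate from continuity of the map $\x\mapsto\x\,T$.
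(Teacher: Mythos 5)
Your proposal is correct and follows essentially the same route as the paper: establish $\iota(\A)\subseteq\L_B$ via the identity $f(\z)=\Phi(f)(\z\,T)$ and the fact that $T$ is real, then obtain $\A\subseteq\A_B$ by intersecting with $H_T$. If anything, your explicit closure step $\overline{S}\,T\subseteq\overline{S\,T}\subseteq\L_B$ is more careful than the paper's proof, which tacitly treats every point of $\A$ as lying in $\Re(V(f))$ itself.
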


\begin{proof}
It is sufficient to show the second inclusion. Let $g = \Phi\circ f \in \CC^B$.
Let $\x \in \A$, so that $\x = \Re(\z)$ for some
$\z$ with $f(\z) = 0$. Then, $g(\z\,T) = 0$, implying that $\Re(\z\,T) \in \L_B$.
Therefore,
$\x\, T = \Re(\z)T = \Re(\z\,T) \in \L_B$.
where the second equality holds since $T$ is a real matrix.
\end{proof}

In Example~\ref{ex:TrivialFactorizations}, the fundamental inclusion
$\A \subset \A_B$ was an equality. It is an interesting
problem to determine when such an equality holds.
We include Silipo's Theorem \cite[Theorem~2.6]{Silipo} here,
as it is the fundamental result on amoebas of exponential sums.
We formulate the theorem in the language of Section \ref{sec:Lattice}.

\begin{theorem}[{Silipo}]
\label{thm:Silipo}
Let $\lb B \rb$ be the minimal rational lift of\/ $\lb A \rb$,
and let $f \in \CC^A$. Then, $\A(f) = \A_{B}(f)$. \qed
\end{theorem}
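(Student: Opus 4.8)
The plan is to show the two inclusions $\A(f) \subseteq \A_B(f)$ and $\A_B(f) \subseteq \A(f)$ separately. The first is immediate from Theorem~\ref{thm:FundamentalInclusion}, since $\lb B \rb \ssq \lb A \rb$ by definition of a rational lift. So the content is the reverse inclusion $\A_B(f) \subseteq \A(f)$, and here the key structural fact is that $\lb B \rb$ is the \emph{minimal} rational lift, so that $\rho(\lb B \rb) = \rho(\lb A \rb)$, i.e. the induced group homomorphism $T\colon \ZZ[B] \to \ZZ[A]$ is an isomorphism onto its image and in fact (since both have the same rank and $\ZZ[A]$ is the image of $\ZZ[B]$) after a suitable choice of representative $B$ we may arrange $T$ to identify $\ZZ[B]$ with $\ZZ[A]$. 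The point is that $B$ is obtained from $A$ by a purely ``linear-algebraic'' change — lifting the $\RR$-linear relations among the columns of $A$ to $\RR$-linear relations that respect the underlying lattice $\ZZ[A]$ — and this change does not see any of the metric/analytic data that defines the variety.

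Concretely, I would argue as follows. Let $\x \in \A_B(f)$, so $\x = \iota^{-1}(\y)$ for some $\y \in \L_B(f) \cap H_T$; write $\y = \x T$. By definition $\y \in \A(g)$ where $g = \Phi(f) \in \CC^B$, so there is a sequence (or, by closedness, a point) $\w^{(k)} \in \CC^m$ with $g(\w^{(k)}) = 0$ and $\Re(\w^{(k)}) \to \y$. The obstruction is that there is no reason $\w^{(k)}$ lies in $H_T \otimes \CC = \CC^n \cdot T$, so we cannot directly pull it back along $T$ to a zero of $f$. This is exactly the gap Silipo's theorem closes, and the mechanism is the order map together with the fact that $B$ and $A$ have the same lattice: one shows that a point of $\RR^m$ lies in $\A(g)$ if and only if its image under the appropriate projection onto $H_T$ lies in $\A_B(f)$, because the complement components of $\A(g)$ are governed by $\ZZ[B] \cap \N(B)$, which maps bijectively under $T$ onto $\ZZ[A] \cap \N(A)$; a complement component of $\A(g)$ restricted to $H_T$ that were strictly larger than the corresponding complement component of $\A(f)$ would have to produce, via the Ronkin/order map for $g$, an order in $\ZZ[B]$ not accounted for in $\ZZ[A]$, contradicting the rank equality $\rho(\lb B \rb) = \rho(\lb A \rb)$.

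I expect the main obstacle to be precisely this last step: translating the equality of lattice ranks into a statement that no ``new'' complement component of the caisson appears, i.e. that $\Omega(g)$ and $\Omega(f)$ correspond to each other under $T$. This requires either (i) invoking the properties of the order map and the Ronkin function $N_g$ on $H_T$ — observing that $N_g$ restricted to $H_T$ is affine on each complement component with gradient in $\ZZ[B]$, and that the identification $T\colon \ZZ[B] \xrightarrow{\sim} \ZZ[A]$ forces these gradients to come from $\ZZ[A]$, hence from genuine complement components of $\A(f)$ — or (ii) citing Silipo's \cite[Theorem~2.6]{Silipo} essentially verbatim after checking that our ``minimal rational lift'' coincides with the construction Silipo uses. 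Since the theorem is stated as Silipo's, the honest route is (ii): verify that Silipo's hypotheses on $A$ and his auxiliary rational configuration match our Definition~\ref{def:MinimalLift} via Theorem~\ref{thm:MinimalRationalLifts}, and that his amoeba of the lifted sum is, in our notation, exactly $\A_B(f)$; the inclusion $\A(f) \subseteq \A_B(f)$ from Theorem~\ref{thm:FundamentalInclusion} then combines with Silipo's reverse inclusion to give equality.
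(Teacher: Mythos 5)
Your proposal is correct and matches the paper: the paper gives no independent proof of this statement, but simply restates Silipo's Theorem~2.6 in the lattice language of Section~\ref{sec:Lattice} and cites it (the \qed with no proof body), which is exactly your route (ii). Your route (i) sketch is not a complete argument (the step from equality of lattice ranks to the absence of new complement components is precisely the analytic content of Silipo's proof via the Jessen/Ronkin function, not a formal consequence), but since you correctly defer to the citation, the proposal is fine.
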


To describe the relationship between $\A$ and $\A_B$ in more detail,
we study relationship between the associated order maps.
Indeed, Theorem~\ref{thm:FundamentalInclusion} implies that there exists a well-defined map
\[
\pi_0(\RR^n \setminus \A_B) \ \rightarrow \ \pi_0(\RR^n \setminus \A),
\]
given by inclusion of subsets of $\RR^n$.

\begin{theorem}
\label{thm:Orders}
Let $\lb B \rb \ssq \lb A \rb$, and let $f \in \CC^A$ with $g = \Phi \circ f \in \CC^B$.
If $E_\beta(g)$ meets $H_T$, so that $E_\beta(g) \cap H_T \subset E$
for some $E \in \pi_0(\RR^n \setminus \A)$,
then $\ord_f(E) = T \beta$.
\end{theorem}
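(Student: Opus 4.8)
The plan is to compute the order of the component $E$ directly from the Ronkin (Jessen) function, using the change of variables $\z \mapsto \z\,T$ that underlies the definition of $\L_B$ and $\A_B$. Recall that the order of a component of the complement of an amoeba is the gradient of the Ronkin function on that component, and that the Ronkin function of $g$ is $N_g(\x) = \int_{S_B} \log|g_\chi(\x)|\,d\eta(\chi)$, where $S_B = \Hom_\ZZ(\ZZ[B],\SS^1)$. First I would fix $\beta \in \Omega(g)$ with $E_\beta(g) \cap H_T \neq \emptyset$ and a point $\x_0 \in E_\beta(g) \cap H_T$; write $\x_0 = \y_0\,T$ for some $\y_0 \in \RR^n$. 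By Theorem~\ref{thm:FundamentalInclusion} applied with $f$ and $g = \Phi\circ f$, the point $\y_0$ lies outside $\A(f)$, hence in a unique component $E$, and I must show $\nabla N_f(\y_0) = T\beta$, i.e.\ that $\ord_f(E) = T\beta$ as an element of $\ZZ[A] \cap \N(A)$.

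The key computation is the relation between the Ronkin functions of $f$ and $g$ along $H_T$. Since $\Phi^{-1}(g)(\z) = g(\z\,T)$ and $g = \Phi\circ f$, we have $f(\z) = g(\z\,T)$ for all $\z \in \CC^{1+n}$; more generally, for a character $\chi \in S_B$, the perturbation $g_\chi$ pulls back to a perturbation of $f$ by the composed character $\chi \circ T \in S_A = \Hom_\ZZ(\ZZ[A],\SS^1)$, using the induced group homomorphism $T\colon \ZZ[B]\to\ZZ[A]$. Thus $f_{\chi\circ T}(\x) = g_\chi(\x\,T)$. Pushing forward the Haar measure $\eta$ on $S_B$ along the (continuous) homomorphism $S_B \to S_A$, $\chi \mapsto \chi\circ T$, and comparing with the Haar measure on $S_A$, I would conclude the pullback identity
\[
N_f(\x) \ = \ N_g(\x\,T) \qquad \text{for } \x \in \RR^n,
\]
at least up to the subtlety that the homomorphism $S_B \to S_A$ need not be surjective. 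Differentiating this identity at $\x = \y_0$ via the chain rule gives $\nabla N_f(\y_0) = T^{\mathsf t}\,\nabla N_g(\x_0)$; since $\x_0 \in E_\beta(g)$, we have $\nabla N_g(\x_0) = \beta$, and hence $\ord_f(E) = T^{\mathsf t}\beta$. (The transpose here is exactly the matrix realizing the group homomorphism $T\colon \ZZ[B]\to\ZZ[A]$ acting on order vectors; with the paper's index conventions this is what is written as $T\beta$.)

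The main obstacle is the Haar-measure bookkeeping in the pullback identity $N_f(\x)=N_g(\x\,T)$: the map $S_B \to S_A$, $\chi\mapsto\chi\circ T$, is a homomorphism of compact abelian groups whose image is a closed subgroup but possibly a proper one, precisely because $r(B)$ can exceed $r(A)$ even when $T$ is a lattice isomorphism. To handle this I would factor $T$ through the minimal rational lift $\widehat{\lb A\rb}$ of $\lb A\rb$ using Theorem~\ref{thm:MinimalRationalLifts}: by Silipo's Theorem~\ref{thm:Silipo} the amoeba and Ronkin function are unchanged when passing to the minimal rational lift, and on that level $\ZZ[\,\cdot\,]$ has full rank so the character-group map is an isomorphism and the measure pushes forward cleanly; the remaining step $\lb B\rb \ssq \widehat{\lb A\rb}$ is an integer transformation, where the pullback of Ronkin functions is classical (it is the standard behaviour of Ronkin functions under monomial substitutions). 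Composing the two steps yields the identity and hence the theorem. A secondary, more routine point is justifying differentiation under the integral sign and the fact that $N_g$ is affine-linear with gradient $\beta$ on $E_\beta(g)$, both of which are standard consequences of the pluriharmonicity of $\log|g_\chi|$ off the variety and are already invoked in Section~\ref{sec:Prel}.
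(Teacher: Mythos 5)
Your overall strategy matches the paper's: reduce to two base cases --- an integer transformation between rational classes (Rullg{\aa}rd's trick) and the passage to a minimal rational lift (Silipo's analysis) --- and compose them via Theorem~\ref{thm:MinimalRationalLifts}. But the composition step as you set it up has a genuine gap. You factor $T$ through $\widehat{\lb A\rb}$ and treat the remaining step $\lb B\rb \ssq \widehat{\lb A\rb}$ as an integer transformation. However, $\lb B\rb \ssq \widehat{\lb A\rb}$ means $\Row(\widehat A)\subseteq \Row(B)$, which holds precisely when $\Row(B)$ is itself rational (only then is it a rational subspace containing $\Row(A)$, hence containing the minimal one). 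The theorem allows irrational $B$: if $\Row(A)$ is an irrational line inside an irrational plane $\Row(B)$, then $\widehat{\lb A\rb}$ is not a lift of $\lb B\rb$ and $T$ does not factor through $\ZZ[\widehat A]$ at all. The paper's proof instead takes minimal rational lifts of \emph{both} $A$ and $B$, obtaining a commutative square with maps $\widehat T\colon\ZZ[\widehat B]\to\ZZ[\widehat A]$, $S_B\colon\ZZ[\widehat B]\to\ZZ[B]$, $S_A\colon\ZZ[\widehat A]\to\ZZ[A]$; Silipo's analysis produces $\gamma\in\ZZ[\widehat B]$ with $\beta=S_B(\gamma)$, and the order of $E$ is $(S_A\circ\widehat T)(\gamma)=(T\circ S_B)(\gamma)=T\beta$. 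You need this zig-zag through $\widehat{\lb B\rb}$, not a direct factorization of $T$.

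A second, smaller problem is the pullback identity $N_f(\x)=N_g(\x\,T)$. It is false whenever the induced homomorphism $T\colon\ZZ[B]\to\ZZ[A]$ has a kernel: the dual character map (which goes $S_A\to S_B$, $\chi\mapsto\chi\circ T$, not the direction you wrote) is then injective but not surjective, so $N_f$ is an integral over a proper closed subgroup of $S_B$ and generically differs from $N_g\circ\iota$. This already happens for the lopsided factorization $A=AI$; e.g.\ for $f=1+w+w^2$ one has $N_f(0)=0$ while the restricted Ronkin function of the generic trinomial at the corresponding point is the Mahler measure of $1+x+y\neq 0$. What survives --- and what Rullg{\aa}rd's trick actually establishes --- is only the statement about the gradient on the particular complement components in question, which is the order statement itself; so in the integer base case you must invoke that result directly rather than derive it from a function identity. (The $T^{\mathsf t}$ in your chain rule is a convention slip --- with $\x$ a row vector and $\beta$ a column the computation gives $T\beta$ directly --- but you flagged that yourself and it is harmless.)
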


\begin{proof}
In the case that $\lb A \rb$ and $\lb B \rb$ are both rational, this follows from
Rullg{\aa}rd's trick to compute orders from \cite[p.~20]{Rullgard}.
In the case that $\lb B\rb$ is a minimal rational lift of $\lb A \rb$, this follows from
Silipo's analysis in \cite[Section 3.2]{Silipo} (see in particular the proof of \cite[Lemma 3.6]{Silipo}). The general case can always be reduced to
a composition of these two cases, using Theorem~\ref{thm:MinimalRationalLifts},
as follows. Let $\widehat A$ and $\widehat B$ be the minimal rational lifts of $A$ and $B$.
By universality of of $\widehat A$, we obtain a commutative diagram of abelian groups
\begin{center}
\begin{tikzpicture}
  \node (A1) at (0,0)  {$\ZZ[\widehat B]$};
  \node (A2) at (2,0)  {$\ZZ[\widehat A]$};
  \node (B1) at (0,-2)  {$\ZZ[B]$};
  \node (B2) at (2,-2)  {$\ZZ[A]$};
  \draw[->] (A1) to node {$\widehat T$} (A2);
  \draw[->] (A1) to node {$S_B$} (B1);
  \draw[->] (A2) to node {$S_A$} (B2);
  \draw[->] (B1) to node {$T$} (B2);
\end{tikzpicture}
\end{center}
We find that $\beta = S_B(\gamma)$
 for some $\gamma \in \ZZ[\widehat B]$ by Silipo's results mentioned above, and by using both
Rullg{\aa}rd's and Silipo's results we find that the order of $E$ is equal to
\[
\ord_f(E) \ = \ (S_A\circ \widehat T) (\gamma) \ = \ (T \circ S_B)\, (\gamma) \ = \ T(\beta).\qedhere
\]
\end{proof}

\begin{theorem}
\label{thm:Injective}
Let $\lb B\rb \ssq \lb A \rb$. Then, we obtain an injective map $\pi_0(\RR^n \setminus \A_B) \rightarrow \pi_0(\RR^n \setminus \A)$ given by inclusion.
\end{theorem}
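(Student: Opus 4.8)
The plan is to deduce injectivity of the inclusion-induced map $\pi_0(\RR^n \setminus \A_B) \rightarrow \pi_0(\RR^n \setminus \A)$ from the order map $\ord_f$ being injective on $\pi_0(\RR^n \setminus \A)$ together with the order computation in Theorem~\ref{thm:Orders}. First I would recall that by Definition~\ref{def:Caissons}, $\A_B(f) = \L_B(f) \cap H_T$ is the image under $\iota$ of the intersection of the large caisson $\L_B(f) = \A(g)$, $g = \Phi\circ f$, with the subspace $H_T = \iota(\RR^n)$. So a connected component $E'$ of $\RR^n \setminus \A_B(f)$ is (via $\iota$) a connected component of $H_T \setminus \A(g)$. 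The key point is that such a component $E'$, sitting inside the complement of the amoeba $\A(g) \subset \RR^m$, is contained in a unique connected component $E_\beta(g)$ of $\RR^m \setminus \A(g)$, and hence carries a well-defined order $\beta = \ord_g(E_\beta(g)) \in \ZZ[B]\cap\N(B)$; this is because $\L_B(f)$ is itself an amoeba (of the exponential sum $g$ with support $B$), so its own order map is defined and injective.

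Next I would set up the two maps carefully. Given components $E_1', E_2'$ of $\RR^n \setminus \A_B(f)$ mapping to the same component $E$ of $\RR^n \setminus \A(f)$ under inclusion, I want $E_1' = E_2'$. Each $E_i'$ lies in some $E_{\beta_i}(g) \cap H_T$, and since $\iota(E_i') \subset E$ we are exactly in the situation of Theorem~\ref{thm:Orders}: it gives $\ord_f(E) = T\beta_1$ and also $\ord_f(E) = T\beta_2$. The subtlety is that this forces $T\beta_1 = T\beta_2$ but not immediately $\beta_1 = \beta_2$, since the group homomorphism $T\colon \ZZ[B]\to\ZZ[A]$ need not be injective. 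Here I would invoke the reduction machinery of Theorem~\ref{thm:MinimalRationalLifts} and the commutative diagram from the proof of Theorem~\ref{thm:Orders}: lift to the minimal rational lifts $\widehat{\lb A\rb}$, $\widehat{\lb B\rb}$, where $\widehat T\colon \ZZ[\widehat B]\to\ZZ[\widehat A]$ is an isomorphism (rational lifts preserve $\rho$, and on a minimal rational lift $\rho = r$, so $\widehat T$ is a rank-preserving, hence invertible, integral map). Writing $\beta_i = S_B(\gamma_i)$ and transporting orders through the diagram, $T\beta_1 = T\beta_2$ becomes, after applying $S_A^{-1}$ and $\widehat T^{-1}$ on the lifted level, the equality $\gamma_1 = \gamma_2$ in $\ZZ[\widehat B]$; chasing back down gives $\beta_1 = \beta_2$. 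Once $\beta_1 = \beta_2 =: \beta$, both $E_1'$ and $E_2'$ are connected components of $E_\beta(g) \cap H_T$; but they are also both contained in the single component $E$ of $\RR^n\setminus\A(f)$, and $E \subset E_\beta(g)\cap H_T$ because every point of $E$ has order $\beta$ as a point of the complement of $\L_B(f)\cap H_T$. Since $E$ is connected and contains $E_1'$ and $E_2'$, all three coincide, giving injectivity.

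The main obstacle I anticipate is precisely the non-injectivity of $T$ on the level of lattices $\ZZ[B]\to\ZZ[A]$: the naive argument "same image under $T$ implies same order implies same component" is circular unless one first knows that $E$ determines $\beta$ and not merely $T\beta$. Resolving this is exactly what the rational-lift diagram buys us — on the minimal rational lifts the horizontal arrow $\widehat T$ is an isomorphism of free abelian groups, so orders there are genuinely separated, and the vertical arrows $S_A, S_B$ are surjective enough to pull the ambiguity apart. A secondary technical point to be careful about is that $\A_B(f)$, viewed in $\RR^n$, is the preimage under $\iota$ of $\A(g)\cap H_T$, and $\iota$ is a linear embedding, so the correspondence $\pi_0(\RR^n\setminus\A_B(f)) \leftrightarrow \pi_0(H_T\setminus\A(g))$ is a genuine bijection; I would state this explicitly so that the order of a component of $\RR^n\setminus\A_B(f)$ makes sense as the order (in $\ZZ[B]$) of the ambient component of $\RR^m\setminus\A(g)$ it lands in. With those two points pinned down, the rest is the diagram chase above, and injectivity follows.
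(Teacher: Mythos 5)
There is a genuine gap at the heart of your argument: the claim that $\widehat T\colon \ZZ[\widehat B]\to\ZZ[\widehat A]$ is an isomorphism is false. Minimal rational lifts preserve the lattice rank of the class they lift, i.e.\ $\rho(\widehat B)=\rho(B)$ and $\rho(\widehat A)=\rho(A)$, but $\rho(B)$ can be strictly larger than $\rho(A)$ (the proposition in Section~\ref{sec:Lattice} only says $\rho$ is \emph{decreasing} along $\ssq$). The cleanest counterexample is the lopsided amoeba: $A=A\cdot I$ with $B=I$ the $N\times N$ identity, where both classes are already rational, so $\widehat A=A$, $\widehat B=B$, $\widehat T=T=A\colon\ZZ^N\to\ZZ[A]$, which is far from injective when $N>1+n$. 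In exactly this case your diagram chase is vacuous and collapses to the circular argument you yourself flagged: $T\beta_1=T\beta_2$ gives no information about whether $\beta_1=\beta_2$. So the central difficulty --- that two components of $\RR^m\setminus\L_B$ with \emph{different} orders $\beta_1\neq\beta_2$ but $T\beta_1=T\beta_2$ might both meet $H_T$ inside one component of $\RR^n\setminus\A$ --- is not resolved. (The remainder of your argument is fine: once $\beta_1=\beta_2$, convexity of $E_\beta(g)\cap H_T$ forces $E_1'=E_2'$; though note your assertion $E\subset E_\beta(g)\cap H_T$ is also not right, since $E$ may contain points of $\A_B$.)

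The paper's proof uses the lattice machinery only for the reduction, not for the punchline: Theorems~\ref{thm:MinimalRationalLifts} and \ref{thm:Silipo} reduce to the case where $A$ and $B$ are both rational (and one intersects with an affine subspace $H$, which convexity then allows one to discard). The rational case is then settled by a genuinely geometric argument via spines: one must show $T$ is injective on the set of orders $\beta$ of those components of $\RR^m\setminus\L_B$ that actually meet $H_T$. Replacing $\L_B$ by its spine $\Sp_B$ and taking $H_T$ rational and in general position, $\Sp_B\cap H_T$ is a tropical variety in $\RR^n$ whose dual triangulation has pairwise distinct vertices, and these vertices are precisely the projections $T\beta$; distinctness of the vertices is what separates the orders. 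To repair your proof you would need to supply an argument of this kind (or some other geometric input) for the rational case --- the commutative diagram of lattices alone cannot do it.
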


\begin{proof}
By Theorems~\ref{thm:MinimalRationalLifts} and \ref{thm:Silipo} it suffices to show that
$\pi_0\big((\RR^n \setminus \A_B)\cap H\big) \rightarrow \pi_0\big((\RR^n \setminus \A)\cap H\big)$
is injective whenever $A$ and $B$ are rational and $H$ is an arbitrary affine subspace of $\RR^n$.
By convexity of the components of the complement of an amoeba, it suffices to show that 
$\pi_0(\RR^n \setminus \A_B) \rightarrow \pi_0(\RR^n \setminus \A)$ is injective whenever $A$
and $B$ are rational.

This last statement follows from the existence of spines of amoebas.
Let $T$ be such that $A = TB$, and consider the caisson $\L_B \subset \RR^m$.
By definition of $\A_B$ and $\L_B$, it suffices to show that $T$ is injective when restricted to the set of orders of $\pi_0\big( (\RR^m \cap \L_B)\cap H_T\big)$.
It is equivalent to show the same statement when $\L_B$ is replaced by its spine $\Sp_B$.
 Since the connected components
of the complement of $\Sp_B$ are open and convex there is no restriction in assuming that 
$H_T$ is rational and in general position with respect to the spine $\Sp_B$
(i.e., its intersection with any cell of $\Sp_B$ is an affine space of expected dimension).
Then, $\Sp = \Sp_B \cap H_T$ is a tropical variety in $\RR^n$.
Hence, the vertices of the dual triangulation to $\Sp$, 
which are all distinct, correspond bijectively to the set of projections 
$T \beta$ where $\beta$ ranges over the set $\pi_0\big( (\RR^m \cap \Sp_B)\cap H_T\big)$. 
\end{proof}

\begin{definition}
Let $\lb B \rb \ssq \lb A\rb$. We define the \struc{\emph{order map of the $B$-caisson}}
$\A_B$ to be the composition of the map $\pi_0(\RR^n \setminus \A_B) \hookrightarrow \pi_0(\RR^n \setminus \A)$ from Theorem~\ref{thm:Injective} and the order map of the amoeba $\A$.
We denote by $\struc{\Omega_B(f)} \subset \ZZ[A]\cap \N$ the image of the order map
for the $B$-caisson $\A_B(f)$.
\endexa
\end{definition}

\begin{remark}
It is clear that $\Omega_B(f)$ only depends on the class $\lb B \rb$. Indeed, if $B$ and $B'$
represent the same class, then we obtain injections
\[
\pi_0(\RR^n \setminus \A_B) \ \hookrightarrow \
\pi_0(\RR^n \setminus \A_{B'}) \ \hookrightarrow \ \pi_0(\RR^n \setminus \A_B).
\]
Since moreover both sets are finite, see for example \cite[(1.15)]{Silipo}, they coincide.
\end{remark}

\subsection{An Interpretation as Lattice Morphisms}
\label{ssec:LatticeMorphisms}

Let us, in this subsection, fix the point configuration $A$.
For any $f \in \CC^A$, we have $\Omega(f) \subset \ZZ[A] \cap \N(A)$.
We could, in what follows, take our starting point in the set $\ZZ[A] \cap \N(A)$.
However, this set is larger than necessary, as, for example, it might be infinite even though
$\Omega(f)$ is always finite.
Following Silipo, let us consider 
the following construction. Let $\lb B \rb$ be a minimal rational lift of $\lb A \rb$,
with the induced isomorphism $T \colon \ZZ[B] \rightarrow \ZZ[A]$.
Since $\lb B \rb$ is rational, the set $\ZZ[B] \cap \N(B)$ is finite.
Let $\omega(A) \subset \ZZ[A] \cap \N(A)$ denote the set $T(\ZZ[B] \cap \N(B))$, 
which is finite as $T$ is an isomorphism of abelian groups. 
Let $\O(A)$ denote the bounded modular lattice of all subsets of $\omega(A)$.
Then, Theorem~\ref{thm:Injective} can be restated as follows.

\begin{theorem}
Each $f \in \CC^A$ induces a morphism
$\zeta(f) \colon \M_N(A) \rightarrow \O(A)$ of bounded modular lattices, 
where $\M_N(A) \subset \M_N$
denotes the lattice of all lifts of the class\/ $\lb A \rb$. \qed
\end{theorem}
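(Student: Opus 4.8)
The plan is to verify that $\zeta(f)$ is a lattice morphism by checking compatibility with the two lattice operations, using the order-map machinery established in Theorems~\ref{thm:Orders} and \ref{thm:Injective}. First I would fix notation: for $\lb B \rb \in \M_N(A)$, set $\zeta(f)(\lb B\rb) = \Omega_B(f)$, which by the preceding remark is a well-defined subset of $\omega(A)$ (one must first observe $\Omega_B(f) \subseteq \omega(A)$; this follows because the minimal rational lift $\widehat{\lb B\rb}$ dominates $\widehat{\lb A\rb}$ by Theorem~\ref{thm:MinimalRationalLifts}, so every order of a component of the complement of $\A_B$ lies in the image of $\ZZ[\widehat A]\cap\N(\widehat A)$ under the isomorphism $T$, which is exactly $\omega(A)$). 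The two endpoints are handled by the trivial factorizations: the bottom element of $\M_N(A)$ is $\lb A\rb$ itself with $\zeta(f)(\lb A\rb) = \Omega(f)$, while the top element is the class of the $N\times N$ identity, whose caisson is the lopsided amoeba, and one notes that $\zeta(f)$ of the top element is $\omega(A)$ itself — actually, to get a genuine lattice morphism we only need it to be the top of $\O(A)$ relative to the image, which is a bookkeeping point I would address by possibly restricting $\O(A)$ to subsets of $\Omega_{I}(f)$, or by simply not claiming the bound on the top is preserved and instead invoking the standard fact that a monotone map between bounded lattices that preserves meets and joins is a lattice morphism.

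The core of the proof is the monotonicity and the meet/join compatibility. Monotonicity is immediate from Theorem~\ref{thm:Injective}: if $\lb B'\rb \ssq \lb B\rb \ssq \lb A\rb$ then the inclusions of complements compose, giving $\Omega_{B'}(f) \subseteq \Omega_B(f)$ after identifying orders via the composite $T$-maps (here Theorem~\ref{thm:Orders} ensures the identification is consistent, since the order of a component is computed functorially through the commutative square of abelian groups). For the join: $\lb B_1\rb \vee \lb B_2\rb$ corresponds, under the anti-isomorphism $\lb C\rb \mapsto \Row\lb C\rb^\perp$, to $\Row(B_1)\cap\Row(B_2)$; concretely a representative is obtained by stacking $B_1$ and $B_2$ and row-reducing. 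A component of the complement of $\A_{B_1\vee B_2}$ is a component of $\A_{B_1}\cap\A_{B_2}$ (intersected appropriately with the relevant $H_T$), so its order lies in $\Omega_{B_1}(f)\cap\Omega_{B_2}(f)$; conversely, if $\alpha$ lies in both, the corresponding components $E_\alpha$ in the two caissons have the same order, hence — by injectivity of the order map on each and the convexity of complement components — their pullbacks to the common refinement overlap and define a single component there with order $\alpha$. This gives $\zeta(f)(\lb B_1\rb\vee\lb B_2\rb) = \zeta(f)(\lb B_1\rb)\cap\zeta(f)(\lb B_2\rb)$, matching the meet in $\O(A)$ (the order reversal from Section~\ref{ssec:LatticeMorphisms} is exactly what makes $\vee$ go to $\cap$).

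The meet compatibility $\zeta(f)(\lb B_1\rb \wedge \lb B_2\rb) = \zeta(f)(\lb B_1\rb) \cup \zeta(f)(\lb B_2\rb)$ is the part I expect to be the main obstacle. Here $\lb B_1\rb\wedge\lb B_2\rb$ corresponds to $\Row(B_1)+\Row(B_2)$, and the caisson $\A_{B_1\wedge B_2}$ is the \emph{largest} of the three, so its complement has the fewest components. One inclusion — that the union of the order sets contains $\Omega_{B_1\wedge B_2}(f)$ — fails at the naive level, so the correct statement requires the $\wedge$ to be computed \emph{within} $\M_N(A)$ and the identification of orders to be routed through $\omega(A)$; the point is that a component surviving in the big caisson, when pulled back along the maps to $\A_{B_1}$ and to $\A_{B_2}$, survives in at least one of them precisely because the spine of the coarse caisson is covered by the images of the spines of the finer ones. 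I would make this precise using the spine argument from the proof of Theorem~\ref{thm:Injective}: take $B_1, B_2$ rational (reducing to this case via Theorems~\ref{thm:MinimalRationalLifts} and \ref{thm:Silipo}), put all relevant affine subspaces $H_T$ in general position with respect to all three spines simultaneously, and observe that the dual triangulation of $\Sp_{B_1\wedge B_2}$ refines to those of $\Sp_{B_1}$ and $\Sp_{B_2}$; every vertex of the coarse dual triangulation is a vertex of at least one refinement, which translates back to the order being in $\Omega_{B_1}(f)$ or $\Omega_{B_2}(f)$. Combining both inclusions closes the meet case, and a monotone map preserving meets and joins between bounded modular lattices is a morphism, completing the proof.
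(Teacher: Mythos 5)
The paper attaches no argument to this theorem beyond the preceding sentence ``Theorem~\ref{thm:Injective} can be restated as follows'': the entire intended content is that $\lb B \rb \mapsto \Omega_B(f)$ is a well-defined, order-preserving map from the interval of lifts of $\lb A\rb$ into the subsets of $\omega(A)$, which follows from the injections $\pi_0(\RR^n\setminus\A_B)\hookrightarrow\pi_0(\RR^n\setminus\A_{B'})$ of Theorem~\ref{thm:Injective} together with the compatibility of order maps in Theorem~\ref{thm:Orders}. Your first paragraph and the monotonicity part of your second paragraph capture exactly this, and that is all the paper claims to prove.

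The rest of your proposal, which tries to verify genuine meet- and join-preservation, contains direction errors and asserts equalities that are too strong. Concretely: under $\ssq$ the class $\lb A\rb$ is the \emph{top} of $\M_N(A)$ and $\lb I\rb$ the bottom, not the other way around; the join $\lb B_1\rb\vee\lb B_2\rb$ has row space $\Row(B_1)\cap\Row(B_2)$, whereas stacking $B_1$ and $B_2$ produces $\Row(B_1)+\Row(B_2)$, i.e.\ a representative of the \emph{meet}; and your claimed identity $\zeta(f)(\lb B_1\rb\vee\lb B_2\rb)=\Omega_{B_1}(f)\cap\Omega_{B_2}(f)$ is incompatible with the monotonicity you establish two sentences earlier, which forces $\zeta(f)(\lb B_1\rb\vee\lb B_2\rb)\supseteq\Omega_{B_1}(f)\cup\Omega_{B_2}(f)$. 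More importantly, even with the directions corrected, the exact equalities required of a lattice homomorphism do not follow from anything in the paper and should not be expected: taking $B_1,B_2$ to be two proper lifts with $\Row(B_1)\cap\Row(B_2)=\Row(A)$, join-preservation would assert $\Omega(f)=\Omega_{B_1}(f)\cup\Omega_{B_2}(f)$, i.e.\ that every component of the complement of the amoeba is certified by one of two strictly coarser caissons --- precisely the kind of statement the whole approximation program is trying (and in general failing) to achieve. So the meet/join portion of your argument is not a repair of a gap in the paper but an attempt to prove a stronger statement that is most likely false; the theorem should be read as asserting a monotone map of the underlying posets, which your monotonicity argument already delivers.
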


\section{Continuity of Amoebas as Functions of the Support}

Amoebas have been considered in various contexts with respect to their configuration spaces $\CC^A$ for a fixed support set $A$. It is a natural question to ask, how amoebas depend on the choice of support set. Particularly, considering amoebas of exponential sums one would like to understand how an amoebas behave under a limit process of support sets. In this section we show that limits of amoebas with respect to their support sets are caissons.

Let us consider the deformation of the exponential sum $f(\mathbf{z})$ as in \eqref{eqn:ExpSum} given by
\[
f_\lambda(\z,\t) \ = \ \sum_{\alpha \in A} c_\alpha e^{\<\alpha, \z\> + \lambda\<\kappa_\alpha, \t \>}
\]
where $\t = (t_1, \dots, t_k) \in \CC^k$ are additional variables, the vectors $\kappa_\alpha \in \RR^k$, and $\lambda$ is a real parameter.
We do not exclude the case that some $\kappa_\alpha = 0$.
We wish to consider the limit 
\[
\lim_{\lambda \rightarrow 0} \A(f_\lambda),
\]
which, by definition, is the (closure of) the set of all limit points of sequences $(\z_k,\t_k) \in \A(f_{\lambda_k})$
where $\lambda_k \rightarrow 0 $ as $k\rightarrow \infty$
and $\lambda_k \neq 0$ for all $k$.
It is a consequence of our proof of the forthcoming
Theorem~\ref{thm:LimitsOfAmoebas} that this limit does not depend on the choice of the sequence 
$\{\lambda_k\}_{k=0}^\infty$.

Let us construct a matrix $B$ as the $(n+k)\times N$ matrix 
\[
B \ = \ \begin{bmatrix}\alpha_1 & \alpha_2 &  \cdots & \alpha_N \\ \kappa_1 & \kappa_2 & \cdots & \kappa_N\end{bmatrix},
\]
and note that $\lb B \rb$ is a lift of $\lb A \rb$ as $A = T B$ where, in block form, $T = (I, 0)$.

\begin{theorem}
\label{thm:LimitsOfAmoebas} We have that
\[
\lim_{\lambda\rightarrow 0} \A(f_\lambda) \ = \ \A_B(f) \times \RR^k.
\]
\end{theorem}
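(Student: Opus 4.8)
The plan is to rescale the extra variables so that the deformation $f_\lambda$ becomes, up to a linear change of coordinates, the exponential sum $g = \Phi\circ f \in \CC^B$ evaluated on a sliding affine copy of $H_T$, and then to take the limit $\lambda \to 0$ geometrically. Concretely, write $\z \in \CC^{1+n}$ and $\t \in \CC^k$, so the support of $f_\lambda(\z,\t)$, viewed as an exponential sum in $(\z,\t)$, has $\alpha$-th column $(\alpha, \lambda\kappa_\alpha)^{\mathsf T}$. For $\lambda \neq 0$ apply the invertible real linear map $(\z,\t)\mapsto(\z, \lambda\t)$; this rescales the $\kappa$-block of the support back to $B$, so that $f_\lambda(\z,\lambda^{-1}\t)$ has support exactly $B = [\alpha_j;\kappa_j]_j$ and coefficients $c_\alpha$ — i.e. it equals $g(\z,\t)$ where $g = \Phi(f)$ under the factorization $A = TB$ with $T = (I,0)$. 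Since amoebas are defined via $\overline{\Re(V)}$ and $\Re(\z,\lambda^{-1}\t) = (\Re\z, \lambda^{-1}\Re\t)$, we get the exact identity
\[
\A(f_\lambda) \;=\; \{(\x,\mathbf{s})\in\RR^{1+n}\times\RR^k \;:\; (\x,\lambda^{-1}\mathbf{s})\in\L_B(g)\} \;=\; D_\lambda\big(\L_B(g)\big),
\]
where $D_\lambda(\x,\mathbf{u}) = (\x,\lambda\mathbf{u})$. (I am suppressing the pseudo-homogenizing coordinate $z_0$; one should check $B$ is indeed pseudo-homogeneous, which it is since $A$ is and $T=(I,0)$ pulls back the form $\xi$ to a form $(\xi,0)$ on $\RR^{1+n+k}$.)

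Next I would identify $\lim_{\lambda\to 0} D_\lambda(\L_B(g))$ as a set. The caisson $\L_B(g)\subset\RR^{n+k}$ is a closed set, and $H_T$ is precisely the subspace $\{\mathbf{u}=0\}$ (the image of $\iota\colon\x\mapsto\x T = (\x,0)$), so by Definition \ref{def:Caissons}, $\A_B(f)$ is the slice $\L_B(g)\cap\{\mathbf{u}=0\}$, read off in the $\x$-coordinates. As $\lambda\to 0$ the map $D_\lambda$ crushes the $\mathbf{u}$-direction: a point $(\x,\mathbf{s})$ is a limit of points $D_{\lambda_k}(\x_k,\mathbf{u}_k) = (\x_k,\lambda_k\mathbf{u}_k)$ with $(\x_k,\mathbf{u}_k)\in\L_B(g)$ iff $\x_k\to\x$, which (since the $\mathbf{s}$-coordinate of the limit is $\lim\lambda_k\mathbf{u}_k$ and can be made anything, or constrained) forces $\x$ to lie in the closure of the projection of $\L_B(g)$ onto its first $1+n$ coordinates, and imposes no constraint on $\mathbf{s}$. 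So the limit equals $\overline{\mathrm{pr}(\L_B(g))}\times\RR^k$, and the content of the theorem is the equality $\overline{\mathrm{pr}(\L_B(g))} = \A_B(f)$, i.e. that the projection of $\L_B(g)$ to the $H_T$-coordinates equals its slice by $H_T$.

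The main obstacle is exactly this last equality: a priori the projection of a closed set onto a subspace is larger than its slice by that subspace. The resolution is the tube-like structure of $\L_B(g) = \A(g)$ along the directions not appearing in the support. Here the extra variables $t_i$ enter $g$ only through $\lambda\langle\kappa_\alpha,\t\rangle$; more to the point, under the factorization $A=TB$ with $T=(I,0)$, the row space $\Row(B)\supsetneq\Row(A)$ and the orthogonal complement of $H_T$ inside $\RR^{n+k}$ is contained in — in fact, I should check, is — a direction along which $\A(g)$ is invariant under translation. The cleanest way is: since $g(\z,\t) = \sum c_\alpha e^{\langle\alpha,\z\rangle}e^{\lambda\langle\kappa_\alpha,\t\rangle}$ and the amoeba is a closed set whose recession behaviour is governed by $\New(B)$, one shows $\A(g)$ is stable under translation by $\ker(T^{\mathsf T})$-type directions when the corresponding coordinate functional is constant on $A$; then the projection onto $H_T$ coincides with the intersection with $H_T$. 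I would prove the translation-invariance directly: if $(\x,\mathbf{u})\in\A(g)$ with $\x=\Re(\z_0,\ldots)$, $\mathbf{u}=\Re(\t)$ and $g(\z,\t)=0$, then for any $\mathbf{v}\in\RR^k$ the point $(\z,\t+i\mathbf{v}/\lambda\cdot(\text{something}))$... — actually the right statement is that replacing $\t$ by $\t + i\mathbf{w}$ changes $g$ by rotating each term by the unit complex number $e^{i\lambda\langle\kappa_\alpha,\mathbf{w}\rangle}$, which shows $\A(g)$ is a union of fibers over its image in $\RR^{1+n}$ only after quotienting by the relevant character torus — this is precisely Silipo's/Favorov's mechanism, and citing Theorem \ref{thm:Silipo} or the Ronkin-function apparatus, the image of $\A(g)$ under $\mathrm{pr}$ equals $\A_B(f)$. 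So the endgame is: (i) the exact identity $\A(f_\lambda) = D_\lambda(\L_B(g))$ via rescaling; (ii) $\lim_{\lambda\to 0}D_\lambda(\L_B(g)) = \overline{\mathrm{pr}(\L_B(g))}\times\RR^k$ by elementary point-set topology, also giving independence of the sequence $\{\lambda_k\}$; (iii) $\overline{\mathrm{pr}(\L_B(g))} = \A_B(f)$ using the tubular invariance of the amoeba $\A(g)$ along the fibers of $\mathrm{pr}$, which is where Silipo's theorem does the real work.
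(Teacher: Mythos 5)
There is a genuine gap, and it originates in step (i): the rescaling identity is inverted. Since $f_\lambda(\z,\t)=g(\z,\lambda\t)$, a point $(\x,\mathbf{s})$ lies in $\A(f_\lambda)$ if and only if $(\x,\lambda\mathbf{s})\in\A(g)$; that is, $\A(f_\lambda)=D_{1/\lambda}\big(\L_B(g)\big)$, the amoeba of $g$ \emph{stretched} by the factor $1/\lambda$ in the $\t$-directions, not compressed by $\lambda$ as you wrote. This is not a cosmetic sign error: under stretching, points of $\L_B(g)$ whose $\mathbf{u}$-coordinate is bounded away from $0$ escape to infinity as $\lambda\to 0$, so the limit is governed by the \emph{slice} $\L_B(g)\cap H_T=\A_B(f)$ (whose neighbourhood gets blown up to cover all of $\RR^k$), whereas the limit of a compression would be governed by the \emph{projection}. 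Your steps (ii) and (iii) therefore analyse the wrong dynamics, and both fail on their own terms as well. Step (ii) is false as point-set topology: $\lim_{\lambda\to0}D_\lambda(S)=\overline{\mathrm{pr}(S)}\times\RR^k$ would require the fibers of $S$ over $\mathrm{pr}(S)$ to be unbounded in every direction, which amoebas do not satisfy. Step (iii) is false because the projection of $\L_B(g)$ onto $H_T$ is in general strictly larger than its slice by $H_T$: take $f(z)=1+e^{z}+e^{2z}$ with $\kappa=(0,1,0)$, so $g(z,t)=1+e^{z+t}+e^{2z}$. For every $z$ with $1+e^{2z}\neq0$ one can solve $e^{z+t}=-(1+e^{2z})$ for $t$, so the projection of $\A(g)$ to the $z$-axis is all of $\RR$; but the slice $\{\Re t=0\}$ requires $e^{x}=|1+e^{2z}|\in\big[|1-e^{2x}|,\,1+e^{2x}\big]$, which is impossible for $x\ll0$. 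There is no ``tubular invariance'' of $\A(g)$ under real translations in $\t$; translating $\t$ by an imaginary vector only rotates the coefficients, which never changes an amoeba and says nothing about the real $\t$-fibers. Silipo's theorem cannot rescue this either: it identifies $\A(f)$ with the slice $\A_B(f)$ for the \emph{minimal rational} lift, and nowhere equates a projection with a slice.

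The paper's proof avoids the rescaled picture and works with the sequences directly. For $\subseteq$: if $\Re(\z_k,\t_k)\in\A(f_{\lambda_k})$ converges, then $\Re(\z_k,\lambda_k\t_k)\in\A(g)$ and $\lambda_k\t_k\to0$, so the limit lies in $\A(g)\cap H_T=\A_B(f)$, with no constraint on the limiting $\t$-coordinate. For $\supseteq$: approximate $(\z^*,\t^*)\in\A_B(f)\times\RR^k$ by interior points $(\z_m,\t_m)$, note $(\z_m,\lambda_k\t_m)\to(\z_m,0)\in\A(g)$ so that $(\z_m,\t_m)\in\A(f_{\lambda_k})$ for $k$ large, and diagonalize. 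A corrected version of your step (i) (with $D_{1/\lambda}$) followed by an honest limit analysis would reduce to essentially this argument; as written, however, the proposal proves the wrong statement.
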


Before we turn to the proof of Theorem~\ref{thm:LimitsOfAmoebas},
let us introduce some additional notation.
Consider the auxiliary exponential sum
\[
g(\z, \t) \ = \ \sum_{i=1}^N c_\alpha e^{\<\alpha_i, \z\> + \<\kappa_\alpha, \t \>}.
\]
We have that $f_\lambda(\z, \t) = g(\z, \lambda \t)$. By definition, $\A_B(f) = \A(g) \cap H_T$.

\begin{proof}[Proof of Theorem~\ref{thm:LimitsOfAmoebas}]

\noindent
Fix a sequence $\{\lambda_k\}_{k=0}^\infty$ such that $\lambda_k \rightarrow 0$
as $k\rightarrow \infty$. Consider an arbitrary associated convergent sequence
\[
\left\{(\z_k, \t_k)\right\}_{k=0}^\infty \ \rightarrow \ (\z^*, \t^*)
\]
with $\Re(\z_k, \t_k) \in \A(f_{\lambda_k})$ for all $k$. It follows that $\Re(\z_k, \lambda_k \t_k) \in \A(g)$. 
Since the sequence $\{\t_k\}_{k=0}^\infty$ converges, it holds that
\[
\lim_{k\rightarrow \infty} \lambda_k \t_k \ = \ 0.
\]
Thus, we have $\lim_{k\rightarrow \infty} \Re(\z_k, \lambda_k\t_k) \in H_T$. Since $\A(g)$ is closed, we know moreover that the limit is contained in $\A(g)$ and we can conclude that 
\[
\Re(\z^*, 0) \ = \ \lim_{k\rightarrow \infty} \Re(\z_k, \lambda_k\t_k) \in \A(g) \cap H_T \ = \ \A_B(f).
\]
In particular, since $\t^{*} \in \RR^k$, 
\[
\lim_{k \rightarrow \infty} \A(f_{\lambda_k}) \ \subset \ \A_B(f) \times \RR^k.
\]

For the converse inclusion, let $\{\lambda_k\}_{k=0}^\infty$ be a sequence such that $\lambda_k \rightarrow 0$, and let $(\z^*, \t^*) \in \A_B(f) \times \RR^k$. 
As $\A_B(f) \times \RR^k$ is a regular closed set (i.e., it is the closure of its interior) we can find a sequence $\{(\z_m, \t_m)\}_{m=0}^\infty$ contained in the interior of 
$\A_B(f) \times \RR^k$ such that 
\[
\lim_{m\rightarrow \infty} (\z_m, \t_m) \ = \ (\z^*, \t^*).
\]
In particular, $\z_m$ is an interior point of $\A_B(f)$ and $\A_B(f) = \A(g)\cap H_T$.
We have for every $\z_m$ that
\[
\lim_{k\rightarrow \infty} (\z_m,  \lambda_k \t_m) \ = \ (\z_m, 0),
\]
and thus, since $\z_m$ is an interior point of $\A_B(f)$, for each $m\in \ZZ$ there exists a 
$K(m) \in \ZZ$ such that $(\z_m,  \lambda_k \t_m) \in \A(g)$ for $k \geq K(m)$.
Choose an increasing sequence $\{k_m\}_{m=0}^\infty$
such that $k_m \geq K(m)$; by taking a subsequence of $\{\lambda_k\}_{k=0}^\infty$ we can assume
that $k_m = m$.
Finally, $(\z_m,  \lambda_m \t_m) \in \A(g)$ is equivalent to $(\z_m, \t_m) \in \A(f_{\lambda_m})$
as $\lambda_m\neq 0$.
It follows that
\[
(\z^*, \t^*) \ =\  \lim_{m\rightarrow \infty} (\z_m, \t_m) \in \lim_{m\rightarrow \infty} \A(f_{\lambda_m}),
\]
which concludes the proof.
\end{proof}

\section{New Certificates for the Existence of Components\\ of the Complement of Amoebas}
\label{Sec:NewBounds}

In this section we show that our approach allows to certify the existence of specific components of the complement of certain amoebas in dependence of the coefficients of its defining exponential sum. These certificates improve previous certificates obtained via lopsided amoebas.

The main problem is to find a semi-algebraic description of a set $U_\alpha(A)$.
 If $\alpha$ is a vertex of $\Conv(A)$, then it follows by \cite[Corollary 1.8, p.~196]{GKZ94} that $U_\alpha(A) = \CC^A$.
If $\alpha$ is not a vertex of $\Conv(A)$ a semi-algebraic description of $U_\alpha(A)$ is unknown in all cases except when $A$ is a barycentric circuit. 

\begin{definition}
We call a support $A = \{\alp(0),\ldots,\alp(n),\gamma\}$ of cardinality $N = n+ 2$
a \struc{\textit{barycentric circuit}} if $\alp(0),\ldots,\alp(n)$ are the vertices of an $n$-dimensional simplex and $\gamma$ is the barycenter of the simplex. If an exponential sum is supported on a barycentric circuit, we say that $f$ is a barycentric circuit. Every barycentric circuit $f$ is of the form 
\begin{equation}
f(\z) \ = \ \sum_{j = 0}^n c_j e^{\langle \mathbf{z},\alp(j) \rangle} - c_\gamma e^{\langle \mathbf{z}, \gamma\rangle} \ 
\end{equation}
For a barycentric circuit $f$ we define $\struc{\eq(f)} \in \RR^n$ as the real part of the unique point where all terms $c_j e^{\langle \mathbf{z},\alp(j) \rangle}$ for $j = 0,\ldots,n$ are \struc{\textit{in equilibrium}}, that is, where they attain the same absolute value.
\endexa
\end{definition}

In the case the $A$ is algebraic we abuse notation and write
\begin{equation}
f(\w) \ = \ \sum_{j = 0}^n c_j \mathbf{w}^{\alp(j)} - c_\gamma \mathbf{w}^\gamma \label{Equ:CircuitPolynomial}.
\end{equation}

\begin{theorem}[{\cite[Theorem 6.1]{TdW13}}]
Let $A$ be an algebraic barycentric circuit. Then, the following statements are equivalent.
\begin{enumerate}[i)]
	\item  $f \in U_\gamma(A)$ 
	\item  $c_\gamma$ is not contained in the region 
 \begin{equation}
	 \Bigg\{\prod_{j = 0}^n {|c_j|}^\frac{1}{1+n} \cdot \sum\limits_{j = 0}^n e^{i \cdot (\arg(c_j) + \langle \alp(j) - \beta, \phi \rangle) } : \phi \in \T^n\Bigg\}, \label{Equ:CircuitPolynomialRegionSolid}
	\end{equation}
	where $\struc{\T} = [0,2\pi)$.
	\item  $\eq(f) \in E_\gamma(f)$,
\end{enumerate}
\label{Thm:Theobald:deWolff:BarycentricCircuitCase}
\end{theorem}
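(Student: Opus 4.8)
For completeness we outline how the equivalence can be proved. The plan is to transport all three conditions onto the single real $n$-torus $\Gamma_0$ lying over the equilibrium point $\x_0 := \eq(f)$, and then to conclude from a short computation there together with a deformation argument. Throughout write $\w = e^{\z}$ and split $f = g - c_\gamma\w^\gamma$ with $g = \sum_{j=0}^n c_j\w^{\alp(j)}$.

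First I would record the identities on $\Gamma_0 = \{\w : \Log|\w| = \x_0\}$. Every monomial has constant modulus there; by the definition of $\x_0$ the $n+1$ vertex terms $c_j\w^{\alp(j)}$ share a common modulus $r$, and since $\gamma = \tfrac1{n+1}\sum_j\alp(j)$ a short computation yields $r\,e^{-\langle\x_0,\gamma\rangle} = \prod_j|c_j|^{1/(n+1)}$, hence $|c_\gamma\w^\gamma| \equiv |c_\gamma|\,r\prod_j|c_j|^{-1/(n+1)}$ on $\Gamma_0$. Parametrising $\Gamma_0$ by $\mathbf y\in\T^n$ via $\w = e^{\x_0 + i\mathbf y}$ and dividing the equation $f(\w) = 0$ through by $r\,e^{i\langle\mathbf y,\gamma\rangle}$ turns the statement ``$f$ vanishes somewhere on $\Gamma_0$'' into the statement ``$c_\gamma$ lies in the set \eqref{Equ:CircuitPolynomialRegionSolid}'' (with $\beta = \gamma$). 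Thus (ii) is equivalent to $\x_0\notin\A(f)$. Since $E_\gamma(f)\subset\RR^n\setminus\A(f)$, the implications (iii) $\Rightarrow$ (ii) and (iii) $\Rightarrow$ (i) are immediate.

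For (ii) $\Rightarrow$ (iii), assume $\x_0\notin\A(f)$; we must show that the component $E$ of $\RR^n\setminus\A(f)$ containing $\x_0$ has order $\gamma$. Both $\x_0$ and the region \eqref{Equ:CircuitPolynomialRegionSolid} are independent of $c_\gamma$. A verification shows that this region does not separate the plane: up to a rotation and a scaling it is the fixed set $\{\zeta_0 + \cdots + \zeta_n : |\zeta_j| = 1,\ \prod_j\zeta_j = 1\}$, which for $n = 1$ degenerates to a line segment through the origin and in general is a compact set whose complement in $\CC$ is connected. Hence we may choose a path from $c_\gamma$ to a value of large modulus that avoids \eqref{Equ:CircuitPolynomialRegionSolid}; along it $\x_0$ stays outside $\A$, so the order of its component, being $\nabla N_f(\x_0)$ and varying continuously, stays constant. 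For $|c_\gamma|$ large we have $|c_\gamma\w^\gamma| > \sum_j|c_j\w^{\alp(j)}| \ge |g(\w)|$ on all of $\Gamma_0$, so Rouch\'e's theorem gives order $\gamma$; therefore $\ord(E) = \gamma$, i.e.\ $\x_0\in E_\gamma(f)$.

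It remains to prove (i) $\Rightarrow$ (iii), and in view of the above it suffices to show $E_\gamma(f)\neq\emptyset \Rightarrow \x_0\notin\A(f)$. This is the main obstacle. I would argue it through the Ronkin function: the components of amoeba complements are convex, so $E_\gamma(f)$ coincides with the interior of the convex set on which $\Psi(\x) := N_f(\x) - \langle\x,\gamma\rangle$ attains its minimum, and on that set $\Psi\equiv\log|c_\gamma|$. Since $\alp(0),\dots,\alp(n)$ are the vertices of $\N$, one has the lower bound $\Psi(\x)\ge\max_{0\le j\le n}\big(\log|c_j| + \langle\x,\alp(j)-\gamma\rangle\big)$, and the AM--GM inequality $\tfrac1{n+1}\sum_j|c_j|e^{\langle\x,\alp(j)\rangle}\ge\prod_j|c_j|^{1/(n+1)}e^{\langle\x,\gamma\rangle}$ — whose equality case is exactly $\x = \x_0$ — shows that this lower envelope is minimised over $\RR^n$ uniquely at $\x_0$. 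Comparing with the upper bound $\Psi(\x)\le\log\big(\sum_\alpha|c_\alpha\w^\alpha|\big) - \langle\x,\gamma\rangle$ and using the circuit relation $\sum_j\alp(j) = (n+1)\gamma$, one pins the minimum locus of $\Psi$ to a neighbourhood of $\x_0$ whenever that locus has nonempty interior, so that $E_\gamma(f)\neq\emptyset$ forces $\x_0\in E_\gamma(f)$ and in particular $\x_0\notin\A(f)$. The delicate point is precisely this last step: the lopsidedness certificate used in (ii) $\Rightarrow$ (iii) is only sufficient, so excluding the scenario ``$E_\gamma(f)\neq\emptyset$ while $\x_0\in\A(f)$'' genuinely requires the circuit-specific estimates for $N_f$ above rather than any soft argument, and this is where I expect the bulk of the work to lie.
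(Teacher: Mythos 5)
First, a framing remark: the paper does not prove this statement at all --- it is quoted verbatim from \cite[Theorem 6.1]{TdW13}, and the ``proof'' in this paper is the citation. So your attempt can only be judged on its own merits. The parts of your outline that work are: the identification of condition (ii) with ``$\eq(f)\notin\A(f)$'' via the fiber torus $\Gamma_0$ (this is correct, and it is exactly how the region \eqref{Equ:CircuitPolynomialRegionSolid} arises, with $\beta=\gamma$); the immediate implications (iii)\,$\Rightarrow$\,(i),(ii); and the deformation-plus-lopsidedness argument for (ii)\,$\Rightarrow$\,(iii), granting the (true but merely asserted) fact that the region is a solid hypocycloid, resp.\ a segment for $n=1$, with connected complement.

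The genuine gap is in (i)\,$\Rightarrow$\,(iii), i.e.\ in excluding the scenario ``$E_\gamma(f)\neq\emptyset$ while $\eq(f)\in\A(f)$'', and your Ronkin-function sketch does not close it. Two concrete problems. First, the assertion that $\Psi\equiv\log|c_\gamma|$ on $E_\gamma(f)$ is false: the constant value of $N_f-\langle\cdot,\gamma\rangle$ on a non-vertex component is a spine coefficient that in general differs from $\log|c_\gamma|$. Already for the barycentric circuit $f=2-3w+w^2=(w-1)(w-2)$ one has $N_f(x)=\max(x,0)+\max(x,\log 2)$, so on the order-$1$ component $(0,\log 2)$ the constant is $\log 2$, not $\log|c_\gamma|=\log 3$. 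Second, the bounds you invoke only show that $\Psi$ is a convex function tending to $+\infty$ whose minimum set $M$ is compact, convex, and contains $E_\gamma(f)$; they do not show that $M$ contains $\x_0$, nor that the interior of $M$ avoids $\A(f)$ (the latter would require a lower bound on the Monge--Amp\`ere mass of $N_f$ on the amoeba, which is only available for $n=2$ in \cite{PR04}). So the decisive implication is not proved. In \cite{TdW13} this direction is handled by a fiberwise analysis specific to circuits: for each $\x$ one compares the set $R_\x=\{g(\w)\w^{-\gamma}:\Log|\w|=\x\}$ (a phase-constrained sum of circles of radii $s_j(\x)=|c_j|e^{\langle\x,\alp(j)-\gamma\rangle}$ with $\prod_j s_j(\x)=\prod_j|c_j|$, so that $\sum_j s_j(\x)\geq(n+1)\prod_j|c_j|^{1/(1+n)}$ by AM--GM) with $R_{\x_0}$, and shows that if $c_\gamma\in R_{\x_0}$ then every $\x\notin\A(f)$ is dominated by a single vertex term, forcing its order to be a vertex and hence $E_\gamma(f)=\emptyset$. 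You correctly flagged that the bulk of the work lies here, but the replacement argument you propose does not go through.
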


The region \eqref{Equ:CircuitPolynomialRegionSolid} contains the origin and is bounded by a hypocycloid, see \cite[Section 6]{TdW13} for further details. 
With the results of this article we obtain the following corollary of Theorem \ref{Thm:Theobald:deWolff:BarycentricCircuitCase}.

\begin{corollary}
\label{cor:Certificates}
Let $B$ be a rational lift of $A$ which is an algebraic barycentric circuit.
Let $f \in \CC^A$ and let $g = \Phi \circ f \in \CC^B$.
If $\eq(g) \in H_T$ and $c_\gamma$ is not contained in the region \eqref{Equ:CircuitPolynomialRegionSolid}, then $E_{T\gamma}(f) \neq \emptyset$.
\label{Cor:CircuitCertificate}
\end{corollary}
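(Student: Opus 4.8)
The plan is to chain together three facts that are already available in the paper: Silipo-type invariance of orders under rational lifts, the order formula of Theorem~\ref{thm:Orders}, and the circuit certificate of Theorem~\ref{Thm:Theobald:deWolff:BarycentricCircuitCase}. Since $B$ is assumed to be a \emph{rational} lift of $A$ which is moreover an algebraic barycentric circuit, the component $E_\gamma(g)$ of the complement of $\A_B(f) = \L_B(f) \cap H_T$ lives in the world where Theorem~\ref{Thm:Theobald:deWolff:BarycentricCircuitCase} applies directly to $g$.

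First I would observe that $g = \Phi \circ f$ is, up to the identification $\Phi$, a barycentric circuit with coefficients $(c_j)$ and $c_\gamma$ inherited from $f$; the region \eqref{Equ:CircuitPolynomialRegionSolid} depends only on these coefficients and on the combinatorial data of the circuit $B$, not on $A$. Hence the hypothesis ``$c_\gamma$ is not contained in the region \eqref{Equ:CircuitPolynomialRegionSolid}'' is precisely condition (ii) of Theorem~\ref{Thm:Theobald:deWolff:BarycentricCircuitCase} for $g$, which by that theorem is equivalent to (i) $g \in U_\gamma(B)$, i.e.\ $E_\gamma(g) \neq \emptyset$ as a component of $\RR^{m} \setminus \A(g) = \RR^m \setminus \L_B(g)$, and also equivalent to (iii) $\eq(g) \in E_\gamma(g)$.

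Next I would use the additional hypothesis $\eq(g) \in H_T$. Combined with (iii), this gives $\eq(g) \in E_\gamma(g) \cap H_T$, so in particular $E_\gamma(g)$ meets $H_T$. Therefore the component $E_\gamma(g) \cap H_T$ is nonempty and contained in some $E \in \pi_0(\RR^n \setminus \A)$ (via the identification $\iota$); concretely $E = E_{\ord_f(E)}(f)$ is nonempty. Now Theorem~\ref{thm:Orders} applies with $\beta = \gamma$ and tells us that $\ord_f(E) = T\gamma$. Thus $E_{T\gamma}(f) = E \neq \emptyset$, which is the conclusion.

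The only delicate point — and the step I would be most careful with — is checking that $E_\gamma(g) \cap H_T$ is nonempty \emph{and} lands inside a single component of the complement of $\A = \A_A(f)$, so that Theorem~\ref{thm:Orders} is legitimately applicable. This is where $\eq(g) \in H_T$ is essential: without it, $E_\gamma(g)$ could be disjoint from $H_T$ and carry no information about $\A$. With it, $\eq(g)$ is an explicit point of $H_T$ lying in $E_\gamma(g)$, hence lying outside $\L_B(f)$, hence (identifying $H_T$ with $\RR^n$ via $\iota$) lying outside $\A_B(f) \supset \A$; so it sits in a well-defined component $E$ of $\RR^n \setminus \A$, and that component meets $E_\gamma(g)$, which is exactly the hypothesis of Theorem~\ref{thm:Orders}. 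Everything else is a direct quotation of the cited results, so no further computation is needed.
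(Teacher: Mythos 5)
Your proposal is correct and follows essentially the same route as the paper's own (much terser) proof: invoke Theorem~\ref{Thm:Theobald:deWolff:BarycentricCircuitCase} to get $\eq(g)\in E_\gamma(g)\neq\emptyset$, use the containment $\A\subset\A_B$ of Theorem~\ref{Thm:AmoebaContainment} to place $E_\gamma(g)\cap H_T$ inside a component $E$ of $\RR^n\setminus\A$, and conclude $\ord_f(E)=T\gamma$ via Theorem~\ref{thm:Orders}. Your explicit discussion of why the hypothesis $\eq(g)\in H_T$ is needed is a useful elaboration of what the paper leaves implicit, but it is not a different argument.
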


\begin{proof}
If $c_\gamma$ is not contained in the region defined in \eqref{Equ:CircuitPolynomialRegionSolid}, then $E_\gamma(g) \neq \emptyset$ by Theorem \ref{Thm:Theobald:deWolff:BarycentricCircuitCase}. Thus, the statement follows from Theorems~\ref{Thm:AmoebaContainment} and \ref{thm:Orders}.
\end{proof}

In what follows we provide two examples demonstrating Corollary~\ref{cor:Certificates}.

\begin{example}
\label{ex:Sec6:Ex1}
Consider the family of univariate polynomials
\[
f(w) =  1 + w^3 + c\, w^4  + \, w^9,
\]
where $c \in \CC$. We associate to $f$ its homogeneous support set $A$ (see Remark~\ref{rem:Dehomogenize})
and rational lift $B$ given by 
\begin{equation}
\label{Equ:Exa1Support}
A \ = \
\begin{bmatrix} 1 & 1 & 1 & 1 \\ 0 & 3 & 4 & 9\end{bmatrix} 
\quad \text{and} \quad
B \ = \
\begin{bmatrix} 1 & 1 & 1 & 1 \\ 0 & 3 & 4 & 9 \\0 & 6 & 2 & 0\end{bmatrix},
\end{equation}
see Figure \ref{Fig:NewBounds1}
Set $g = \Phi \circ f$, so that
\[
g(\w) \ = \ 1 + w_1^3 w_2^6 + c \,w_1^4 w_2^2  +  w_1^9.
\]
For every fixed $c$, let $\struc{a_1,\ldots,a_9} \in \CC$ denote the roots of $f(w)$ ordered with respect to magnitude. The set of norms of these roots is the image of $\A(f)$ under the exponential map. 
We would like to determine for which $c$ we have that $|a_4| = |a_5|$.
The lopsidedness certificate in this example implies that $|a_4| \neq |a_5|$ if 
\[
c \ > \ \min\{ w^{-4} + w^{-1} + w^5 : w \in \RR\} \ = \ 3.
\]  
It follows from Corollary \ref{Cor:CircuitCertificate} that $|a_4| < |a_5|$ if $c \in \CC$ is not contained in the region 
\eqref{Equ:CircuitPolynomialRegionSolid}. Let us write $c = re^{i \theta}$. Then, by applying a Gr\"obner basis computation, the boundary of \eqref{Equ:CircuitPolynomialRegionSolid} is given by the hypocycloid $h(r, \theta) = 0$ 
\begin{equation}
 h(r, \theta) \ = \ -27 + 18 r^2 + r^4 - 8 r^3 \cos(3 \theta).
 \label{Equ:Exa1Hypocycloid}
\end{equation}
In particular, if $h(r,\theta) > 0$, then $|a_4| < |a_5|$. Notice that $r > 3$ implies that $h(r, \theta)  > 0$,
so our certificate is an improvement of the lopsidedness certificate. See Figure~\ref{Fig:NewBounds1}
for a comparison.

For example, if we require that $|c| > 1.5$ then we obtain the following numerical 
intervals in the argument of $c$ which ensures that $|a_4| < |a_5|$:
\[
 \arg(c) \in [-0.25\pi, 0.42\pi] \ \cup \
  [0.91\pi, -0.91\pi] \ \cup \ [-0.42\pi, -0.25\pi].
\]
Similarly, if $|c| > 2.5$ then we obtain the following numerical 
intervals:
\[
\arg(c) \in [0.32\pi, 0.34\pi] \ \cup \ [0.99\pi, -0.99\pi] \ \cup \ [-0.34\pi, -0.32\pi].
\]
\endexa
\end{example}

\begin{figure}
\ifpictures
\includegraphics[width=0.29\linewidth]{./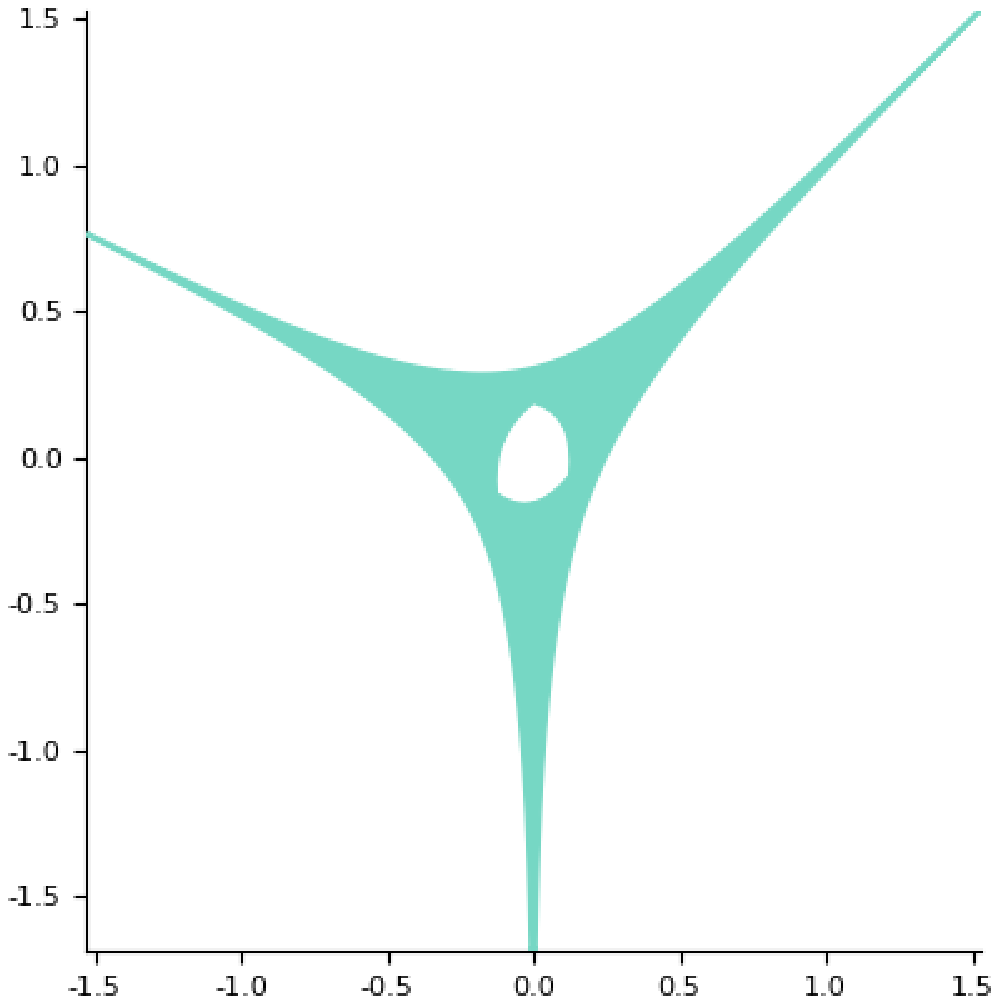} \quad
\includegraphics[width=0.34\linewidth]{./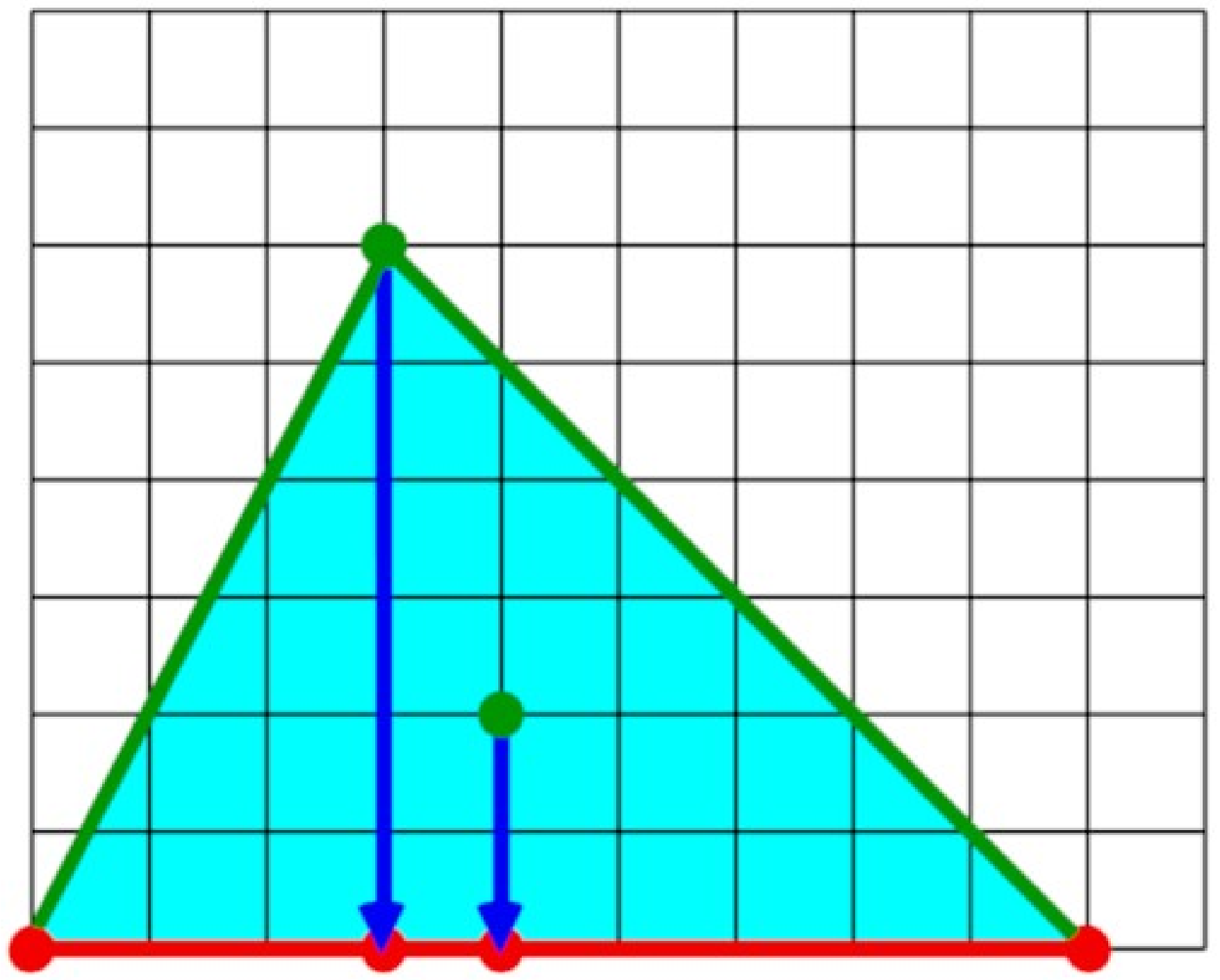} \quad
\includegraphics[width=0.29\linewidth]{./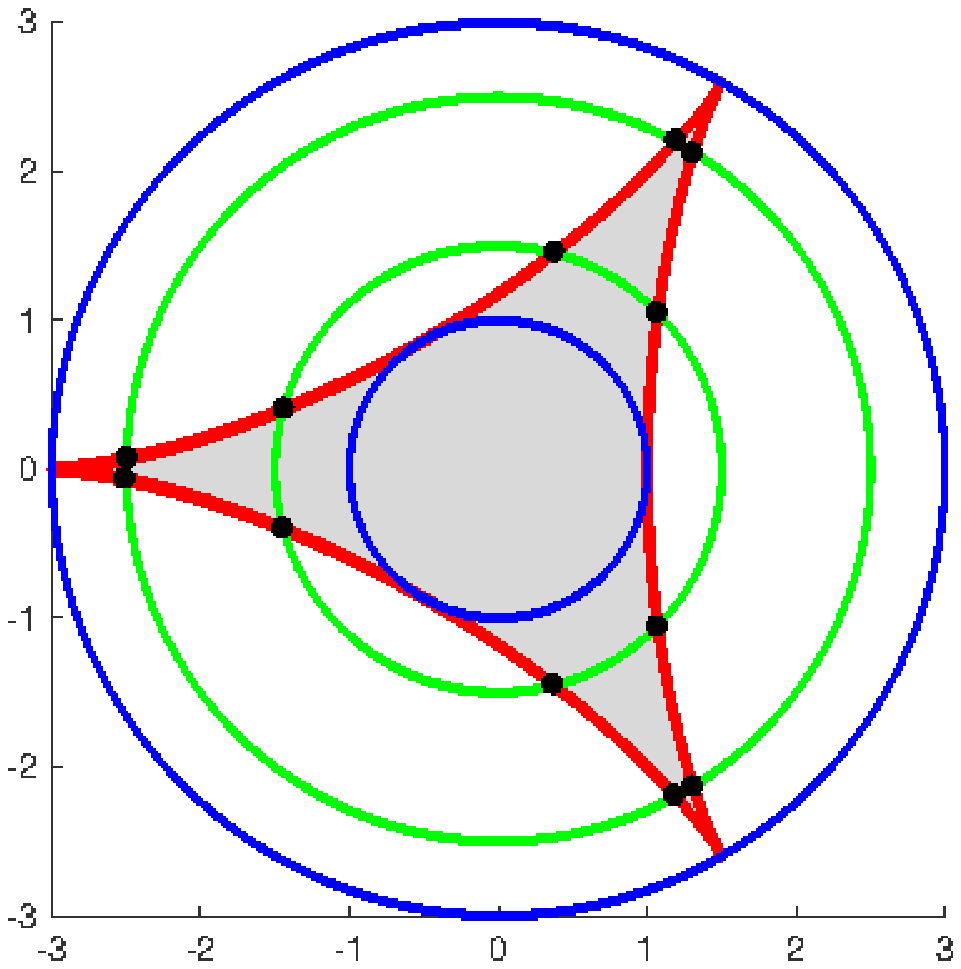}
\fi
\caption{Left, the amoeba of $g(\w)$ from Example~\ref{ex:Sec6:Ex1} for $c = 2.5\, e^{\frac 5 6 \pi i}$. Middle, the Newton polytope of $B$ with its projection onto $A$. 
Right, the region \eqref{Equ:CircuitPolynomialRegionSolid} bounded by the hypocycloid $h(r,\theta) = 0$ 
in red.}
\label{Fig:NewBounds1}
\end{figure}

\begin{example}
\label{ex:Sec6:Ex2}
Consider the family of bivariate polynomials
\[
f(w_1,w_2) \ = \ 1 +w_1^2 w_2^2 + c \,w_1^3 w_2^3+w_1^4 w_2^6 + w_1^6 w_2^4,
\]
where $c \in \CC$. We associate to $f$ its homogeneous support set $A$ (see Remark~\ref{rem:Dehomogenize})
and a rational lift $B$ given by 
\begin{equation}
\label{Equ:Exa2Support}
A \ = \
\begin{bmatrix}  1 & 1 & 1 & 1 & 1 \\ 0 & 2 & 3 & 4 & 6 \\  0 & 2 & 3 & 6 & 4 \end{bmatrix},
\quad \text{and} \quad
B \ = \
\begin{bmatrix}  1 & 1 & 1 & 1 & 1 \\  0 & 2 & 3 & 4 & 6 \\
 0 & 2 & 3 & 6 & 4 \\ 0 & 4 & 1 & 0 & 0 \end{bmatrix},
\end{equation}
see Figure \ref{Fig:NewBounds2}. Set $g = \Phi \circ f$, so that
\[
g(w_1, w_2,w_3) \ = \ 1 + w_1^2 w_2^2 w_3^4 + c \, w_1^3 w_2^3 w_3+ w_1^4 w_2^6 + w_1^6 w_2^4.
\]
By the lopsidedness criterion we can conclude that $\A(f)$ 
has a bounded component of order $(3,3)$ if $|c| > 4$.
It follows from Corollary \ref{Cor:CircuitCertificate} that $\A(f)$ 
has a bounded component of order $(3,3)$ if $c \in \CC$ is not contained in the region 
\eqref{Equ:CircuitPolynomialRegionSolid}. Again, let us write $c = re^{i \theta}$. Then, the boundary of \eqref{Equ:CircuitPolynomialRegionSolid} is in this case given by the hypocycloid $h(r, \theta) = 0$ where
\begin{equation}
 h(r, \theta) \ = \ -4096 + 768 r^2 + 6 r^4 + r^6 - 54 r^4 \cos(4 \theta).
 \label{Equ:Exa2Hypocycloid}
\end{equation}
In particular, if $h(r,\theta) > 0$, then $\A(f)$ has a bounded component of order $(3,3)$. 
For example, if we require that $|c| > 2.5$ then we obtain the following numerical 
intervals in the argument of $c$ which ensures the existence of a component of order $(3,3)$:
\[
 \arg(c) \in [0.08\pi, 0.42\pi] \ \cup \ [0.58\pi, 0.92\pi] \ \cup \ [-0.92\pi, -0.58\pi] \ \cup \ [-0.42\pi, -0.08\pi].
\]
Similarly, if $|c| > 3.5$ then we obtain the following numerical 
intervals:
\[
\arg(c) \in [ 0.01\pi, 0.49\pi] \ \cup \ [0.51\pi, 0.99\pi] \ \cup \  [-0.99\pi, -0.51\pi] \ \cup \ [-0.49\pi,-0.01\pi].
\]
\begin{figure}
\ifpictures
\includegraphics[width=0.3\linewidth]{./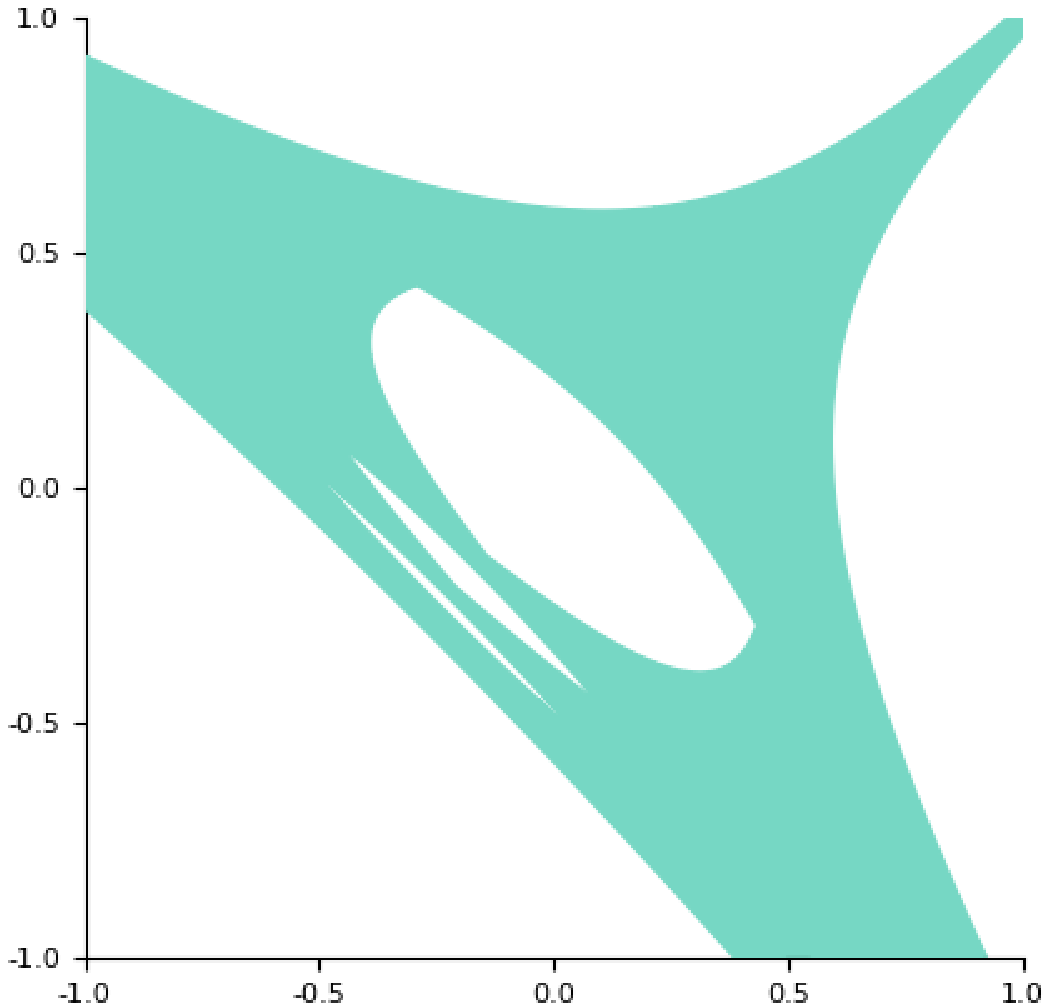} \quad
\includegraphics[width=0.3\linewidth]{./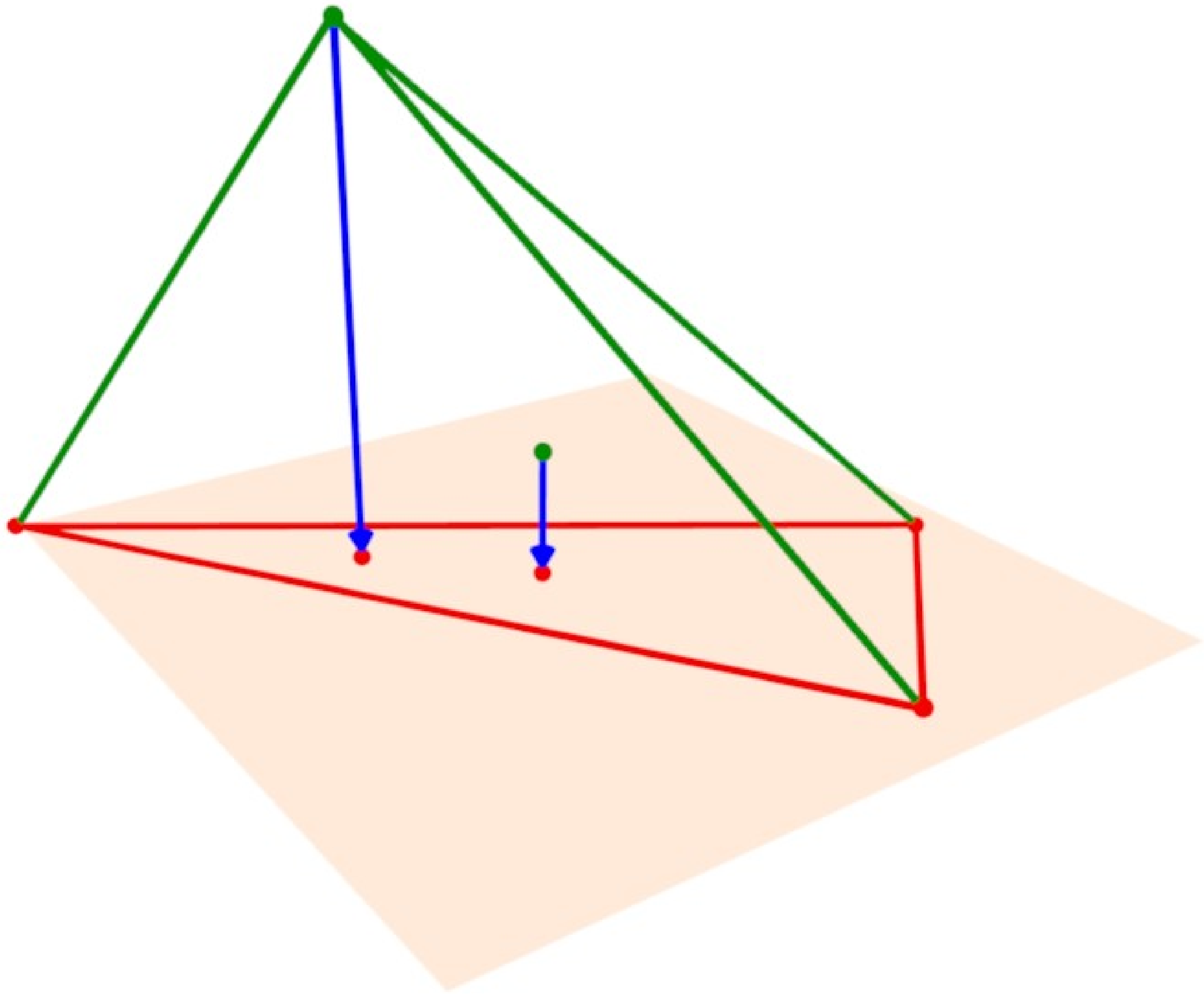} \quad
\includegraphics[width=0.3\linewidth]{./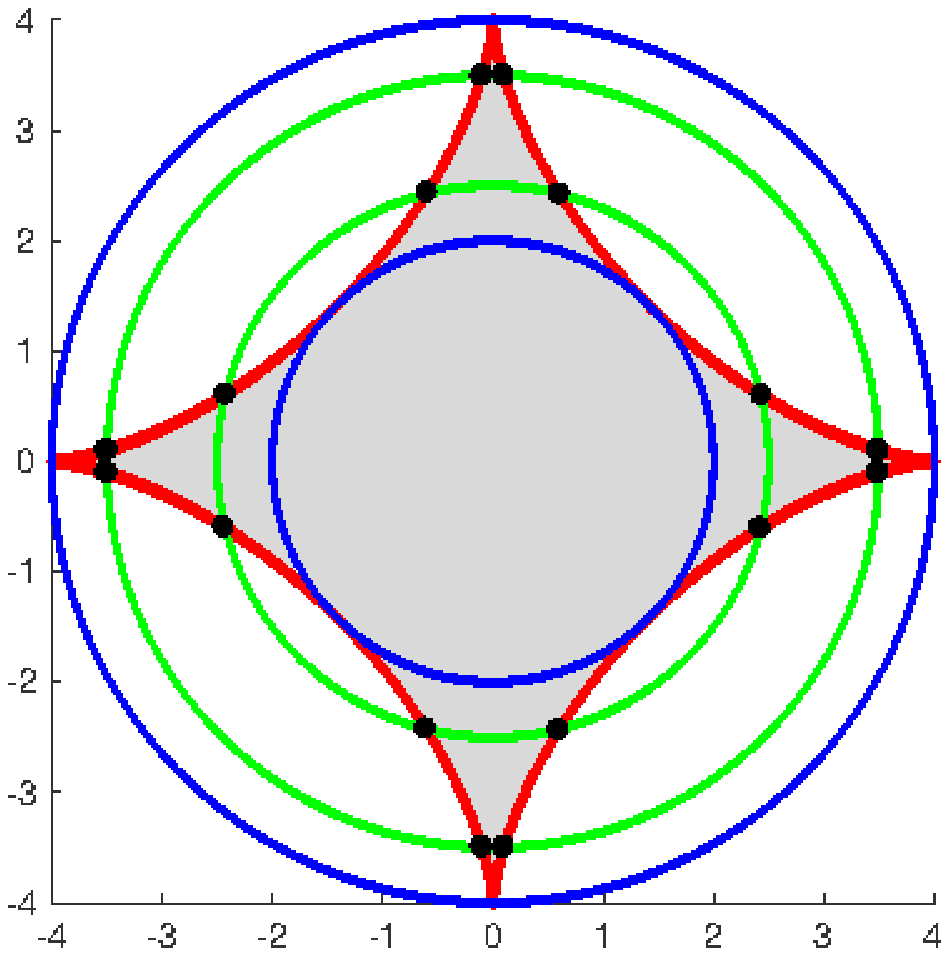}
\fi
\caption{Left, the amoeba of $f(w_1,w_2)$ from Example~\ref{ex:Sec6:Ex2} for $c = 3.5\,e^{0.63\, \pi i}$. Middle, the Newton polytope of $B$ with its projection onto $A$. 
Right, the region \eqref{Equ:CircuitPolynomialRegionSolid} bounded by the hypocycloid $h(r,\theta) = 0$ 
in red.}
\label{Fig:NewBounds2}
\end{figure}
\endexa
\end{example}

Corollary \ref{Cor:CircuitCertificate} is applicable also when $\rank(A) < \rank(B) + 1$. We leave the task of constructing an example for the curious reader, to keep the article at a reasonable length.

The fact that Theorem \ref{Thm:Theobald:deWolff:BarycentricCircuitCase} requires a barycentric circuit is a serious restriction. For non-barycentric circuits the only known certificates are certain upper and lower bounds on the norms of the coefficients, see \cite[Theorems 4.1 and 4.4]{TdW13}. Unfortunately, the lower bounds yields no new certificates in conjunction with Theorem \ref{Thm:AmoebaContainment}. The upper bounds are equivalent to
the lopsidedness criterion, see \cite[Theorem 5.3]{TdW13}.

We observed experimentally that a non-barycentric circuit polynomial with $n \geq 2$ has a solid amoeba if and only if the coefficient of the term whose exponent corresponds to the interior point is not contained in a region bounded by a hypocycloid determined by the remaining coefficients. We believe that a similar statement as Theorem~\ref{Thm:AmoebaContainment} holds for all circuits, this is the subject of an ongoing investigation.

\bibliographystyle{amsplain}

\end{document}